\newcommand*\patchAmsMathEnvironmentForLineno[1]{%
  \expandafter\let\csname old#1\expandafter\endcsname\csname #1\endcsname
  \expandafter\let\csname oldend#1\expandafter\endcsname\csname end#1\endcsname
  \renewenvironment{#1}%
     {\linenomath\csname old#1\endcsname}%
     {\csname oldend#1\endcsname\endlinenomath}}%
\newcommand*\patchBothAmsMathEnvironmentsForLineno[1]{%
  \patchAmsMathEnvironmentForLineno{#1}%
  \patchAmsMathEnvironmentForLineno{#1*}}%
\newcommand{\N}{\mathbb{N}}
\renewcommand{\P}{\mathbb{P}}
\newcommand{\Q}{\mathbb{Q}}
\newcommand{\R}{\mathbb{R}}
\newcommand{\cH}{\mathcal{H}}
\newcommand{\cJ}{\mathcal{J}}
\newcommand{\cO}{\mathcal{O}}
\renewcommand{\a}{\alpha}
\renewcommand{\d}{\delta}
\newcommand{\e}{\varepsilon}
\renewcommand{\phi}{\varphi}
\newcommand{\eg}{{\rm e.g.\ }} 
\newcommand{\ie}{{\rm i.e.\ }}
\renewcommand{\leq}{\leqslant}
\renewcommand{\geq}{\geqslant}
\newcommand{\abs}[1]{\left\lvert#1\right\rvert}
\newcommand{\ac}{\mathit{ac}}
\newcommand{\an}{\mathit{an}}
\renewcommand{\div}{\mathrm{div}}
\DeclareMathOperator{\dd}{\sqrt{-1}\partial\bar{\partial}}
\DeclareMathOperator{\End}{End}
\DeclareMathOperator{\Ent}{Ent}
\DeclareMathOperator{\Ext}{Ext}
\DeclareMathOperator{\Hom}{Hom}
\DeclareMathOperator{\id}{id}
\DeclareMathOperator{\lct}{lct}
\DeclareMathOperator{\mult}{mult}
\DeclareMathOperator{\Proj}{Proj}
\DeclareMathOperator{\rank}{rank}
\DeclareMathOperator{\Ric}{Ric}
\DeclareMathOperator{\Rm}{Rm}
\DeclareMathOperator{\supp}{supp}
\DeclareMathOperator{\Tr}{Tr}
\numberwithin{equation}{section}       
\newtheorem{prop} {Proposition} [section]
\newtheorem{thm}[prop] {Theorem} 
\newtheorem{dfn}[prop] {Definition}
\newtheorem{lem}[prop] {Lemma}
\newtheorem{rem}[prop]{Remark}
\newtheorem{setting}[prop] {Setting} 
\theoremstyle{remark}
\newtheorem*{ackn}{\bf{Acknowledgment}}
\newtheorem*{dfn*}{\bf{Definition}}
\title[]{On the Miyaoka-Yau type inequality 
for manifolds with nef anti-canonical line bundle} 
\date{\today} 
\author{Tomoyuki Hisamoto}
\address{Tokyo Metropolitan University\\
Minami-Ohsawa\\
Tokyo\\ 
Japan}
\email{hisamoto@tmu.ac.jp}
\begin{document}

\maketitle

\maketitle
\setcounter{tocdepth}{1}

\begin{abstract}
Based on the recent work of K.~Zhang \cite{Zhang21}, 
we discuss the Miyaoka-Yau type inequality 
for projective manifolds with nef anti-canonical 
line bundle, assuming the lower bound of  
the delta-invariant introduced by Fujita and Odaka. 
\end{abstract}

\tableofcontents

\section{Introduction}
\label{Introduction}

The celebrated Miyaoka-Yau inequality 
restricts the topology 
of complex manifolds 
with ample canonical line bundle 
and has significant role 
in higher dimensional 
algebraic geometry. 
In the two-dimensional case, 
as it was originally shown by \cite{Miyaoka77} and  
\cite{Yau77},  
the same inequality actually holds 
for any smooth minimal surface. 
People tried to extend the inequality 
to the higher dimension. 
The recent paper \cite{Liu20} of W. Liu, 
based on the work \cite{Song20}, \cite{Dy20}, 
showed the Miyaoka-Yau inequality holds 
for any smooth minimal model, \ie 
manifolds with nef canonical 
line bundle. 
The proof itself is evocative, 
as it links the scalar curvature 
with the numerical dimension of the 
canonical line bundle.  
Inspired by his work, 
we will study the anti-canonical setting. 

\begin{thm}\label{Theorem A}
    Let $X$ be an $n$-dimenstional projective manifold 
    with nef anti-canonical line bundle. 
    Let $A$ be an ample line bundle and 
    assume the lower bound 
    of the delta-invariant 
    \begin{equation}\label{assumption of Theorem A}
        \limsup_{\e \to 0}\d (-K_X +\e A) 
        > 1. 
    \end{equation} 
Then the Miyaoka-Yau type inequality 
    \begin{equation}\label{MY}
        \bigg\{ 2(n+1)c_2(X) -n c_1^2(X) \bigg\} c_1(X)^{n-2}\geq 0 
     \end{equation} 
     holds. 
     If the anti-canonical line bundle is nef and big, 
     the assumption (\ref{assumption of Theorem A}) is equivalent to $\d(-K_X)>1$ and  
     the equality of (\ref{MY}) holds if and only if 
     the anti-canonical model 
     admits a finite, codimension-one \'etale cover $\P^n \to X_{ac}$. 
\end{thm}

In the anti-canonical setting, 
the classical result \cite{CO75} asserts that 
the inequality (\ref{MY}) holds 
for any K\"ahler-Einstein Fano manifolds. 
On the other hand, \cite{FO16} introduced the delta-invariant 
and showed that $X$ admits a unique K\"ahler-Einstein metric if $\d(-K_X)>1$. 
Thus we obtain the ample case in Theorem \ref{Theorem A}. 
The result for arbitrary nef anti-canonical line bundles 
possibly be a kind of folklore to the experts, since it  directly 
follows from the recent existence result of the twisted K\"ahler-Einstein metric 
combined with the classical inequality (\ref{Chen-Ogiue}) 
for the Riemannian tensor norm. 
\begin{thm}[Consequence of \cite{Zhang21}, Theorem 2.3]\label{existence of twisted KE}
    Let $\theta$ be a smooth negative $(1, 1)$-form 
    in the cohomology class $-c_1(A)$. 
    Under the assumption of Theorem \ref{Theorem A}, 
    there exists a twisted K\"ahler-Einstein metric 
 $\omega_\e \in  2\pi c_1(-K_X) +\e c_1(A) $ 
 satisfying 
 \begin{equation}
    \Ric(\omega) =\omega +\e \theta. 
 \end{equation} 
\end{thm}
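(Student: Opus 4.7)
The plan is to realize the statement as a direct consequence of \cite{Zhang21}, Theorem 2.3, applied to the ample perturbations $L_\e := -K_X + \e A$. First I would dispatch the cohomological bookkeeping: since $-K_X$ is nef and $A$ is ample, $L_\e$ is ample for every $\e > 0$, the class $2\pi c_1(L_\e)$ is K\"ahler, and with $\theta$ representing $-c_1(A)$ the identity $2\pi c_1(-K_X) = 2\pi c_1(L_\e) - \e c_1(A)$ shows that the prescribed equation $\Ric(\omega_\e) = \omega_\e + \e\theta$ is balanced in cohomology.

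Next I would translate the hypothesis and reshape the equation. The condition $\limsup_{\e \to 0}\d(-K_X + \e A) > 1$ yields a sequence $\e_k \downarrow 0$ and a uniform $\d_0 > 0$ with $\d(L_{\e_k}) \geq 1 + \d_0$. Setting $\chi := -\theta$, a smooth strictly positive representative of $c_1(A)$, the desired equation becomes $\Ric(\omega_{\e_k}) + \e_k \chi = \omega_{\e_k}$, a standard twisted K\"ahler-Einstein equation on the polarized manifold $(X, L_{\e_k})$ with a smooth positive twist of vanishing size. The existence theorem of \cite{Zhang21} then applies: the uniform lower bound $\d(L_{\e_k}) \geq 1 + \d_0$ supplies the coercivity of the twisted Ding functional needed to produce a minimizer, and elliptic regularity of the associated complex Monge-Amp\`ere equation upgrades that minimizer to a smooth K\"ahler form, which is the sought $\omega_{\e_k}$.

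The principal obstacle I anticipate is not analytic but one of conventions: one must verify that the Fujita-Odaka delta-invariant $\d(L_\e)$ appearing in the hypothesis is exactly the invariant that governs solvability in Zhang's twisted formulation. Since $\d$ is defined intrinsically from the polarized pair $(X, L_\e)$ through basis-type divisors, independently of any twist, this identification should be immediate. Granted that compatibility, the theorem follows verbatim from Zhang's output applied to each element of the sequence $\e_k$.
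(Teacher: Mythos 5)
Your proposal takes the same route as the paper: Theorem \ref{existence of twisted KE} is stated there precisely as a direct consequence of \cite{Zhang21}, Theorem 2.3, applied to the ample classes $L_\e=-K_X+\e A$, with the hypothesis $\limsup_{\e\to0}\d(-K_X+\e A)>1$ furnishing a sequence $\e_k\to0$ along which $\d(L_{\e_k})>1$; your cohomological bookkeeping and your identification of the relevant delta-invariant are both fine. The one point you get backwards is the nature of the twist. Rewriting the equation as $\Ric(\omega)+\e\chi=\omega$ with $\chi=-\theta>0$ does not turn it into a ``standard'' positively twisted equation: in the normalization $\Ric(\omega)=\omega+\eta$ the twist is $\eta=\e\theta$, which is \emph{negative}, and this is exactly the case in which geodesic convexity of the twisted K-energy and Ding functional fails (see \cite{BDL15}), so the usual chain ``coercivity of the twisted Ding functional $\Rightarrow$ existence of a minimizer $\Rightarrow$ smooth solution'' from \cite{BBEGZ11} or \cite{BBJ15} is not available here. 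The entire content of the citation is that Zhang's quantization argument produces the solution despite this loss of convexity --- the paper flags this explicitly as the reason the earlier existence results do not suffice. Since you invoke \cite{Zhang21}, Theorem 2.3 as a black box, the conclusion stands, but the mechanism you describe for why that theorem applies should not be phrased as the standard positive-twist variational argument.
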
 
The point is that the above result allows the 
negative twist $\theta$, compared to the 
previous result \cite{BBJ15}. 

If the anti-canonical line bundle is nef and big, 
one may also obtain a little bit more algebraic proof 
relying on the result of \cite{Xu}. 
The equality condition for (\ref{MY}) will be discussed 
along this line. 

We will give another approach, applying the idea of \cite{Liu20} 
to the variational framework for the constant scalar 
curvature K\"ahler metric. 
It will be succesful if there exists a 
sequence of the cscK (constant scalar curvature K\"ahler) metrics. 

\begin{thm}\label{existence of cscK}
    If each $2\pi c_1(-K_X)+ \e_j c_1(A)$ 
    admits a cscK metric for 
    some sequence $\e_j \to 0$, 
    then the inequality (\ref{MY}) holds. 
    If we assume $\nu(-K_X)=1$ 
    besides the condition in Theorem \ref{Theorem A}, 
    there exists a cscK metric 
 $\omega_\e \in  2\pi c_1(-K_X) +\e c_1(A) $ 
 for any small $\e>0$. 

\end{thm}

It is interesting that 
the both twisted 
K\"ahler Einstein and cscK metric 
ensure the inequality (\ref{MY}). 
To the best of the author's knowledge, 
the second part of the above theorem  
has been not included in the existence results so far, 
\eg \cite{Dervan}, Theorem 1.1, \cite{Zhang21}, Theorem 2.4. 
Some interesting examples, geometrically ruled surfaces and nine points blowing up 
of $\P^2$, will be discussed. 

Following the idea of \cite{Tian92}, 
one may also derive from Theorem \ref{existence of twisted KE} 
the semistability of the holomorphic tangent vector bundle.

\begin{thm}\label{slope semistability}
Assume $-K_X$ is nef and big. 
If $\d(-K_X)>1$, 
The holomorphic tangent bundle $T_X$
and its canonical extenstion sheaf 
is slope semistable 
in the sense of Mumford-Takemoto. 
\end{thm}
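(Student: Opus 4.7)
The plan is to adapt Tian's argument from \cite{Tian92} — which derived semistability of $T_X$ from the existence of a K\"ahler-Einstein metric on a Fano manifold — by substituting the twisted K\"ahler-Einstein metric $\omega_\e$ of Theorem \ref{existence of twisted KE} for the K\"ahler-Einstein metric, and then letting $\e\to 0$. The key observation is that the twisted equation $\Ric(\omega_\e)=\omega_\e+\e\t$ makes the induced Hermitian metric $h_\e$ on $T_X$ Hermitian-Einstein up to an error of order $\e$: the Ricci endomorphism is $\Ric^\#(\omega_\e)=\id+\e\,\t^\#$, and crucially the negativity $\t\leq 0$ makes $\t^\#=\omega_\e^{-1}\t$ a negative semidefinite Hermitian endomorphism. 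This sign is essential.

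For any saturated coherent subsheaf $\cF\subset T_X$ of rank $r$, the Gauss-Codazzi identity for the $h_\e$-orthogonal projection $\pi_\cF$ (defined outside a codimension $\geq 2$ analytic set) yields
\begin{equation*}
\deg_{\omega_\e}(\cF) \;\leq\; \int_X \Tr\bigl(\pi_\cF\,\Lambda_{\omega_\e} F_{T_X,h_\e}\bigr)\,\frac{\omega_\e^n}{n!}
= rV_\e+\e\int_X \Tr(\pi_\cF\,\t^\#)\,\frac{\omega_\e^n}{n!},
\end{equation*}
with $V_\e=\int_X\omega_\e^n/n!$; the analogous identity with $\cF=T_X$ gives $\deg_{\omega_\e}(T_X)=nV_\e+\e\int_X \Lambda_{\omega_\e}\t\cdot\omega_\e^n/n!$. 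Since $0\leq\pi_\cF\leq \id$ and $\t^\#\leq 0$, one has $\Tr(\pi_\cF\,\t^\#)\leq 0$ pointwise, so dividing by the two ranks and subtracting,
\begin{equation*}
\mu_{\omega_\e}(\cF)-\mu_{\omega_\e}(T_X)\;\leq\;-\frac{\e}{n}\int_X \Lambda_{\omega_\e}\t\cdot\frac{\omega_\e^n}{n!} = \frac{\e}{n}\,c_1(A)\cdot[\omega_\e]^{n-1}/(n-1)!,
\end{equation*}
using $[\t]=-c_1(A)$. Both sides are polynomial in $\e$: as $\e\to 0$ the slopes converge to $\mu_{-K_X}(\cF)$ and $\mu_{-K_X}(T_X)$ respectively, so in the limit $\mu_{-K_X}(\cF)\leq \mu_{-K_X}(T_X)$. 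This is Mumford-Takemoto semistability of $T_X$ with respect to the nef and big class $-K_X$.

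For the canonical extension sheaf $\cE$ — the rank $n+1$ locally free sheaf defined by the class $c_1(-K_X)\in H^1(X,\Omega_X^1)\simeq \Ext^1(T_X,\cO_X)$, sitting in $0\to\cO_X\to\cE\to T_X\to 0$ — one equips $\cE$ with the Hermitian metric coming from $h_\e$ on $T_X$ together with the trivial metric on $\cO_X$. A direct curvature computation via the Atiyah class of $\omega_\e$ shows this metric also enjoys the approximate Hermitian-Einstein property with the same $O(\e)$ defect, and the identical Gauss-Codazzi plus $\e\to 0$ limit argument delivers semistability of $\cE$. The principal technical challenge I anticipate is the passage to the $\e\to 0$ limit for the non-ample polarization $-K_X$: although slopes remain well defined since $(-K_X)^{n-1}$ is a movable class, one must verify that the Gauss-Codazzi inequality, established at the level of the smooth metric $h_\e$, passes coherently through the limit, which requires careful control of the error terms as the metric degenerates.
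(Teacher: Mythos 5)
Your proposal is correct and follows the same skeleton as the paper's argument (the twisted K\"ahler--Einstein metric $\omega_\e$ of Theorem \ref{existence of twisted KE}, Tian's canonical extension with the $\Omega^1_X$-valued form $\a=a\,\omega_{g_\e}$, and a Chern--Weil/Gauss--Codazzi comparison of slopes followed by $\e\to0$), but it handles the key technical point in a genuinely different and, I think, cleaner way. The paper routes the argument through Kobayashi's criterion that an approximate Hermite--Einstein structure implies semistability (Theorem \ref{approximate HE implies semistability}), and must therefore show that the twist endomorphism $\e\Theta_\e=\e\,g_\e^{-1}\theta$ tends to zero; since $g_\e$ degenerates as $\e\to0$ this is not automatic, and the paper invokes the K\"ahler--Einstein current of \cite{DZ22} and convergence of non-pluripolar products to get it. You instead exploit the sign $\theta\leq0$: the defect $\e\theta^\#$ is negative semidefinite, so $\Tr(\pi_\cF\,\e\theta^\#)\leq0$ for every subsheaf and only the trace integral $\e\int_X\Lambda_{\omega_\e}\theta\,\omega_\e^n$ --- which is purely cohomological and manifestly $O(\e)$ --- enters the slope comparison. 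This removes the dependence on \cite{DZ22} and on any pointwise or $L^2$ control of $\Theta_\e$. Three small remarks. First, your phrase ``Hermitian--Einstein up to an error of order $\e$'' is not literally justified in sup-norm (precisely because $\theta^\#=g_\e^{-1}\theta$ may blow up as the metric degenerates), but your argument as executed never uses such a bound, only the sign, so nothing is lost. Second, for the canonical extension you should record the curvature formula (the paper's Theorem \ref{curvature formula}) and the normalization $a=(n+1)^{-1/2}$, and observe that the resulting mean curvature is $\tfrac{n}{n+1}\id_E$ plus a block-diagonal error with blocks $0$ and a positive multiple of $\e\theta^\#\leq0$, so the same sign argument applies verbatim to subsheaves of $E$. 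Third, your closing worry about passing the Gauss--Codazzi inequality ``through the limit'' is unfounded: for each fixed $\e$ the inequality already yields the purely cohomological statement $\mu_{[\omega_\e]}(\cF)\leq\mu_{[\omega_\e]}(E)+O(\e)$, and the limit is then taken of polynomials in $\e$, with denominators controlled by $(-K_X)^n>0$ since $-K_X$ is big.
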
 

\begin{ackn}
    The author thanks Professor S.~Boucksom 
    for his pointing out in the first version of our manuscript 
    that Theorem A more directly follows 
    from \cite{Zhang21}, Theorem 2.3. 
    He also thanks Professor M.~Iwai 
    for the fruitful discussion about the 
    equality condition in Theorem \ref{Theorem A}. 
    I am also grateful to Peofessor K.~Zhang 
    for his kind and helpful comments. 
\end{ackn}

\section{Delta invariant and K\"ahler-Einstein metric on a Fano manifolds}

In this section 
we explain some preliminary materials 
concerning the canonical energy functionals 
on the space of K\"ahler metrics. 
For a while we fix an ample line bundle $L$ on $X$ and 
take a K\"ahler metric $\omega$ 
such that $[\omega] =c_1(L)$. 
It is called constant scalar curvature K\"ahler 
(in short, cscK) metric if 
the scalarcurvature 
$R(\omega)= \Tr \Ric(\omega)$ 
is constant everywhere. 
It is well-known that in the case 
$L = \pm K_X$ 
cscK is equivalent to 
the K\"ahler-Einstein condition 
$\Ric(\omega) =\mp \omega$. 
Notice that, since $\omega $ is assinged in the 
fixed cohomology class, the volume 
$V:= \int_X \omega^n$ and the mean value 
\begin{equation}
    \hat{R} 
    := \frac{1}{V}\int_X R(\omega) \omega^n 
    =-\frac{nK_XL^{n-1}}{L^{n}}
\end{equation} 
is determined by $c_1(L)$. 

Let us fix $\omega_0$ and 
consider the set 
\begin{equation}
    \cH_{\omega_0} 
    := \bigg\{ 
        \phi \in C^\infty(X: \R): 
        \omega_0 + \dd \phi >0 \bigg\}. 
\end{equation} 
By the famous $\partial \bar{\partial}$-lemma, 
we may identify $\cH_{\omega_0} /\R$ with 
the collection of K\"ahler metrics in $c_1(L)$. 
Each tangent vector $u \in T_\phi\cH_{\omega_0}$ 
is identified with a smooth function on $X$. 
The first important energy 
on this space is Monge-Amp\`ere energy 
$E\colon \cH_{\omega_0}\to \R$ 
which is characterized by the differential 
$(dE)_\phi u =V^{-1}\int_X u \omega_\phi^n$. 
This energy is explicitly written as $E(\phi) 
= (nV)^{-1}\sum_{i=0}^n \int_X \phi 
\omega_0^{i}
\wedge \omega_\phi^{n-i}$ and 
effectively used in the study 
of the complex Monge-Map\`ere equation. 
We refer the textbook \cite{GZ17} for the 
exposition. 
Similarly, for a smooth real $(1, 1)$-form 
$\gamma$ 
one may define the 
twisted Monge-Amp\`ere energy 
$E_\gamma \colon \cH_{\omega_0}\to \R$ 
which satisfies 
\begin{equation}\label{differential}
    (dE_\gamma)_\phi u 
    =\frac{1}{V}\int_X u \gamma \wedge \omega_\phi^{n-1}.  
\end{equation} 
The differential description determines the energy 
up to addition of constant. 
One indeed has the explicit formula 
\begin{equation}\label{twisted MA energy}
    E_\gamma (\phi) 
    = \frac{1}{nV}\sum_{i=1}^n \int_X \phi 
    \gamma \wedge \omega_0^{i-1}
    \wedge \omega_\phi^{n-i}. 
\end{equation} 
A simple integration-by-part computation shows that 
 $E_\gamma$ defined by (\ref{twisted MA energy}) 
 enjoys the property (\ref{differential}). 
In the special case $\gamma =\Ric(\omega_0)$ 
we will write $E_{\Ric}$. 

Since $E(\phi+c)=E(\phi)+c$, $E$ is not 
defined as a fucntion in $\omega=\omega_\phi$. 
For this reason some variant of the Monge-Amp\`ere 
energy is useful in the study of standard metrics. 
One is the simple J-functional 
\begin{equation}
    J(\phi) := \frac{1}{V}\int_X \phi  \omega_0^n 
    -E(\phi), 
\end{equation}
which is invariant under the scaling 
$\phi \mapsto \phi +c$ and a nonnegative functional.  
It is also convenient to introduce 
the symmetric (with respect to $\omega_0$ and $\omega$) I-functional: 
\begin{equation}
I(\phi) 
:= \frac{1}{V} \int_X \phi \omega_0^n 
-\frac{1}{V} \int_X \phi \omega_\phi^n. 
\end{equation} 
Using integration by part 
we observe  
$I \geq 0$ and 
a simple comparison:  
\begin{equation}\label{I vs J}
 \frac{n+1}{n} J \leq I \leq (n+1)J. 
\end{equation} 
It is also introduced by \cite{Don99} 
the modified J-fuctional 
\begin{equation}
    \cJ_\gamma (\phi) 
    :=E_\gamma(\phi) - c_\gamma E(\phi), 
\end{equation} 
where $c_\gamma := V^{-1}[\gamma]L^{n-1} $ is the numerical constant 
such that $\cJ_\gamma(\phi +c)= \cJ_\gamma(\phi)$.  
The cricital point of $\cJ_\gamma$ 
satisfies the so called J-equation 
\begin{equation}\label{J-equation}
    \Tr_{\omega} \gamma = n c_\gamma
\end{equation}  
and 
numerical criterion for the existence of the solution 
has been obtained by \cite{SW08}, \cite{Song20}, and \cite{Chen21}. 

Now we introduce 
the most important K-energy functional 
intorduced by \cite{Mab86}, which is 
characterized by the property 
\begin{equation}
    (dM)_\phi u 
    =-\frac{1}{V}\int_X u (R(\omega)-\hat{R})
     \omega_\phi^n.  
\end{equation} 
It actually defines a geodecially convex function 
on $\cH_{\omega_0}$ and 
a given $\omega$ is a critical point of $M$ 
if and only if it is cscK.  
The following explicit 
formula is due to \cite{Chen00b}. 

\begin{thm}[\cite{Chen00b}. See also \cite{BHJ19}.] 
Denote the relative entropy of the 
two probability measure $\nu, \mu$ by 
\begin{equation}
\Ent(\nu \vert \mu) 
:= \int_X \log{\bigg[  \frac{d\nu}{d\mu} \bigg]} 
d\nu. 
\end{equation} 
Then we have for all $\phi \in \cH_{\omega_0}$ the identity 
\begin{align}\label{K-energy}
    M(\phi) 
    &= \Ent(V^{-1}\omega_\phi^n\vert V^{-1}\omega_0^n) 
    -n E_{\Ric}(\phi) + \hat{R} \cdot E(\phi) \\
    &= \Ent(V^{-1}\omega_\phi^n\vert V^{-1}\omega_0^n) 
    +n \cJ_{-\Ric}(\phi) 
\end{align} 

\end{thm}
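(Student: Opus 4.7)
The plan is to verify both displayed identities by showing that each side has the same first variation on $\cH_{\omega_0}$ and that they agree at $\phi = 0$. Setting $f_\phi := \omega_\phi^n / \omega_0^n$, one has $\Ent(V^{-1}\omega_\phi^n \vert V^{-1}\omega_0^n) = V^{-1}\int_X \log f_\phi \cdot \omega_\phi^n$, and the local $\dd$-lemma identity $\Ric(\omega_\phi) = \Ric(\omega_0) - \dd \log f_\phi$ will drive the computation. The differentials of $M$, $E_\gamma$, and $E$ are already recorded in the excerpt, so the only nontrivial task is to compute $(d\Ent)_\phi u$.

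To do this, I would write $\omega_{\phi+tu}^n = e^{h_t}\omega_0^n$ and differentiate in $t$ to obtain $\dot{h}_0 \cdot \omega_\phi^n = n\,\dd u \wedge \omega_\phi^{n-1}$. Substituting into $\frac{d}{dt}\big|_{t=0}\int_X h_t\, \omega_{\phi+tu}^n$ produces two terms; the first is $\int_X \dot{h}_0\, \omega_\phi^n = n\int_X \dd u\wedge \omega_\phi^{n-1} = 0$ by Stokes, leaving only $\int_X h_0 \cdot n\, \dd u\wedge \omega_\phi^{n-1}$. Integrating by parts and using $\dd h_0 = \Ric(\omega_0) - \Ric(\omega_\phi)$ together with $n\Ric(\omega_\phi)\wedge \omega_\phi^{n-1} = R(\omega_\phi)\,\omega_\phi^n$ yields
\begin{equation*}
    (d\Ent)_\phi u
    = n (dE_{\Ric})_\phi u - \frac{1}{V}\int_X u\, R(\omega_\phi)\, \omega_\phi^n.
\end{equation*}
Combining this with the formulas recorded earlier for $(dE_{\Ric})_\phi u$ and $(dE)_\phi u$, the first variation of the asserted right-hand side is $-V^{-1}\int_X u(R(\omega_\phi) - \hat{R})\,\omega_\phi^n = (dM)_\phi u$. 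Since $\Ent$, $E_{\Ric}$, $E$, and $M$ all vanish at $\phi=0$, the constant of integration is zero, which proves the first line.

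For the second equality I would use the cohomological identity $\hat{R} = n c_{\Ric}$, which follows from $V\hat{R} = \int_X R(\omega)\,\omega^n = n\int_X \Ric(\omega)\wedge \omega^{n-1} = n\,[\Ric]\cdot L^{n-1} = nV c_{\Ric}$. By linearity of $E_\gamma$ and $c_\gamma$ in $\gamma$, one has $E_{-\Ric} = -E_{\Ric}$ and $c_{-\Ric} = -c_{\Ric}$, so
\begin{equation*}
    n\, \cJ_{-\Ric}(\phi) = n E_{-\Ric}(\phi) - n c_{-\Ric} E(\phi) = -n E_{\Ric}(\phi) + \hat{R}\cdot E(\phi),
\end{equation*}
which rewrites the first line as the second.

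The main obstacle is keeping careful track of signs and numerical factors in the integration by parts for the entropy variation, and being attentive to the distinction between $\Ric(\omega_0)$ and $\Ric(\omega_\phi)$ appearing in $\dd\log f_\phi$; beyond that, the argument uses only compactness of $X$ and Stokes' theorem, with no analytical subtleties in the smooth category.
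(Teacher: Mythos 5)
Your proof is correct, and it is essentially the standard derivation: the paper itself gives no proof of this identity, deferring to \cite{Chen00b} and \cite{BHJ19}, where the result is established by exactly your computation of the first variation of the entropy term via $\dd\log(\omega_\phi^n/\omega_0^n)=\Ric(\omega_0)-\Ric(\omega_\phi)$ and matching differentials and the value at $\phi=0$. Your cohomological identity $\hat{R}=nc_{\Ric}$ is also the right way to pass to the second line, consistent with the paper's normalization $[\Ric(\omega_0)]=c_1(-K_X)$ implicit in its formula for $\hat{R}$.
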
 

One of the great achievement 
in recent developement of this area is 
the following. 

\begin{thm}\cite{CC21a,CC21b}\label{CC}
If the K-energy is proper, \ie 
$M \geq \d I$ for a positive constant $\d >0$, 
then $(X, L)$ admits a cscK metric. 
\end{thm}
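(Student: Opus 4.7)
The plan is to deploy a continuity method for the cscK equation, treated as a coupled second-order system rather than the intrinsic fourth-order PDE. Setting $\omega_\phi = \omega_0 + \dd\phi$ and $\omega_\phi^n = e^{F}\omega_0^n$, the condition $R(\omega_\phi)=\hat R$ becomes the pair
\begin{equation*}
  (\omega_0 + \dd\phi)^n = e^{F}\omega_0^n,
  \qquad
  \Delta_{\omega_\phi} F = \operatorname{tr}_{\omega_\phi}\Ric(\omega_0) - \hat R.
\end{equation*}
I would introduce a one-parameter family interpolating between a known solvable reference at $t=0$ (for instance a suitably twisted cscK equation with large positive twist, where the twisted K-energy is automatically proper and coercive, or a Calabi--Yau type problem) and the desired cscK equation at $t=1$, and let $T\subset [0,1]$ be the set of $t$ admitting a smooth solution.

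Openness of $T$ is a standard Fredholm argument: the linearisation of the coupled system at a solution is of Lichnerowicz type, elliptic and self-adjoint, whose cokernel is spanned by holomorphy potentials and can be killed modulo $\Aut_0(X,L)$. The substance of the theorem is closedness, for which one needs uniform a priori estimates on $(\phi_t, F_t)$ along the path. Differentiating the K-energy along $t$ using the Chen formula \eqref{K-energy} shows that $M(\phi_t)$ stays bounded; the properness hypothesis then yields $I(\phi_t) \leq \delta^{-1} M(\phi_t) \leq C$, and combining with $I \geq \tfrac{n+1}{n}J$ together with a Tian-type $\alpha$-invariant inequality produces a uniform entropy bound $\Ent(V^{-1}\omega_{\phi_t}^n \mid V^{-1}\omega_0^n) \leq C$ and a uniform control on $E(\phi_t)$ and the oscillation of $\phi_t$.

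The main obstacle, and the essential contribution of \cite{CC21a,CC21b}, is upgrading the entropy bound on $F_t$ to an $L^\infty$ bound. This is achieved by testing the second equation against powers $e^{pF_t}$ suitably cut off, and running a Moser iteration whose Sobolev constants depend only on the entropy bound and on the $C^0$ control on $\phi_t$; the Monge--Amp\`ere equation must be coupled in throughout the iteration, since neither equation alone controls $F$. Once $\|F_t\|_{L^\infty}$ is in hand, Yau--Aubin's second-order estimate bounds $\operatorname{tr}_{\omega_0}\omega_{\phi_t}$, Evans--Krylov supplies a $C^{2,\alpha}$ bound, and Schauder theory bootstraps to $C^\infty$. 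With these uniform bounds, closedness of $T$ follows, so $T=[0,1]$ and one obtains a cscK metric in $c_1(L)$. The hard part is unambiguously the $L^\infty$ estimate on the volume ratio $F$ from the entropy bound — every other ingredient (openness, entropy control from properness, classical $C^2$-Evans--Krylov--Schauder bootstrap) was available before, but the quantitative control of $F$ using only the coupled system and an entropy bound is what had been missing, and is the reason the theorem was out of reach prior to \cite{CC21a,CC21b}.
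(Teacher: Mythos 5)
This theorem is not proved in the paper at all: it is quoted as a black box from \cite{CC21a,CC21b} and used as an input, so there is no internal argument to compare yours against. Judged against the actual Chen--Cheng proof, your outline gets the architecture right --- rewriting the fourth-order cscK equation as the coupled second-order system in $(\phi,F)$, running a continuity path from a solvable twisted problem, using properness to bound $I(\phi_t)$ and hence (via the Chen--Tian formula (\ref{K-energy}), since $E_{\Ric}$ and $E$ are controlled by $I$) the entropy, and identifying the passage from an entropy bound to an $L^\infty$ bound on $F$ as the decisive new ingredient. One small correction: no Tian-type $\a$-invariant inequality is needed for the entropy bound; it drops out of (\ref{K-energy}) once $M(\phi_t)$ and $I(\phi_t)$ are both bounded.

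The genuine gap is in the step you yourself flag as the hard one. ``Testing the second equation against powers $e^{pF_t}$ and running a Moser iteration whose Sobolev constants depend only on the entropy bound and the $C^0$ control on $\phi_t$'' does not close: the natural iteration takes place with respect to the unknown metric $\omega_{\phi_t}$, whose Sobolev constant is exactly what you do not control a priori, and an entropy bound on $F$ gives only $L\log L$ integrability, which is not enough to start such an iteration against the background metric either. Chen--Cheng instead obtain the $L^\infty$ bound on $F+\e\phi$ (and subsequently the $W^{2,p}$ estimates) by the Alexandrov--Bakelman--Pucci maximum principle, comparing with an auxiliary potential solving a separate complex Monge--Amp\`ere equation whose right-hand side is built from $e^F$; this auxiliary-function device is the content that was ``missing before'' and it is not a variant of Moser iteration. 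As written, your proposal names the obstacle correctly but does not supply the idea that overcomes it, so it is a roadmap of the cited papers rather than a proof. For the purposes of the present paper that is harmless --- the theorem is meant to be cited, not reproved --- but the sketch should not be mistaken for a self-contained argument.
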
 

Based on the work \cite{N90}, 
G. Tian \cite{Tian97} introduced the following 
invariant to ensure the existence of 
a K\"ahler-Einstein metric. 
\begin{dfn}
    Let $L$ be a big line bundle. 
    We define alpha-invariant $\a(L)$ 
    as a supremum of positive constant $a>$ 
    for which some constant $C_a$ ensures 
    the estimate 
    \begin{equation}
\int_X e^{-a (\phi -\sup_X \phi)} \leq C_a
    \end{equation} 
 for all $\phi \in \cH_{\omega_0}$. 
\end{dfn}

We indeed have the following properness result 
for the K-energy. 
\begin{thm}[\cite{BBEGZ11}, Proposition 4,13. See also \cite{BHJ17}, Proposition 9.16.]\label{properness in the Fano case}
   Let $X$ be a complex Fano manifold. 
    If $\a=\a(-K_X) >\a'>\frac{n}{n+1}$, 
    we have properness of the K-energy: 
\begin{equation}
    M \geq \big(\a'- \frac{n}{n+1}\big) I -C. 
\end{equation} 
\end{thm}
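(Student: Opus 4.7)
The plan is to combine Chen's formula for the K-energy (Theorem \ref{K-energy}) with the Legendre-dual characterization of relative entropy, using the alpha-invariant to control the resulting logarithmic integral.

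First I would rewrite the non-entropy part of $M$ in a more tractable form. Let $h$ be the Ricci potential of $\omega_0$, defined by $\Ric(\omega_0) - \omega_0 = \dd h$. An integration-by-parts computation from the explicit formula (\ref{twisted MA energy}) (telescoping after moving $\dd$ onto $\phi$) yields the clean identity
\[
n(E - E_{\omega_0})(\phi) = -(I - J)(\phi),
\]
while a similar telescoping gives $n E_{\dd h}(\phi) = V^{-1}\int_X h(\omega_\phi^n - \omega_0^n)$, which is bounded by $2\|h\|_\infty$. Since $E_{\Ric} = E_{\omega_0} + E_{\dd h}$, substituting into Chen's formula with $\hat R = n$ produces
\[
M(\phi) = \Ent(V^{-1}\omega_\phi^n \mid V^{-1}\omega_0^n) - (I - J)(\phi) + O(1).
\]

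Next I would bound the entropy from below using its variational characterization. Since both $M$ and $I$ are invariant under $\phi \mapsto \phi + c$, normalize $\sup_X \phi = 0$. Choosing the test function $f = -\alpha'\phi$ in the Legendre-dual formula for $\Ent$ against two probability measures gives
\[
\Ent(V^{-1}\omega_\phi^n \mid V^{-1}\omega_0^n) \geq -\alpha' V^{-1}\int_X \phi\, \omega_\phi^n - \log V^{-1}\int_X e^{-\alpha'\phi}\omega_0^n.
\]
By the hypothesis $\alpha(-K_X) > \alpha'$ together with $\sup\phi = 0$, the logarithmic term is bounded by a universal constant. Using the identity $V^{-1}\int\phi\,\omega_\phi^n = E(\phi) - (I-J)(\phi)$ (which follows from the derivative computation $V^{-1}\int \phi\omega_\phi^n = (n+1)E - n E_{\omega_0}$ combined with the definitions of $I$ and $J$), this becomes
\[
\Ent \geq \alpha'\bigl[(I-J)(\phi) - E(\phi)\bigr] - C.
\]

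Finally, inserting this entropy bound into the formula for $M$ gives
\[
M(\phi) \geq (\alpha' - 1)(I - J)(\phi) - \alpha' E(\phi) - C.
\]
Under $\sup\phi = 0$ we have $\phi \leq 0$, whence $V^{-1}\int \phi\,\omega_0^n \leq 0$, i.e.\ $J(\phi) \leq -E(\phi)$. Using $-\alpha' E \geq \alpha' J$ and rearranging yields
\[
M(\phi) \geq (\alpha' - 1) I(\phi) + J(\phi) - C.
\]
The dimensional comparison $J \geq I/(n+1)$ from (\ref{I vs J}) then converts this to $M \geq (\alpha' - n/(n+1)) I - C$.

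The main obstacle is the structural identity $n(E - E_{\omega_0}) = -(I - J)$ in Step~1; once this is available, the proof is a routine Legendre bound plus dimensional bookkeeping. The characteristic constant $n/(n+1)$ emerges precisely from the interplay between $-E \geq J$ (under sup-normalization) and $J \geq I/(n+1)$, which is why the hypothesis $\alpha > n/(n+1)$ is the natural threshold in Tian's criterion.
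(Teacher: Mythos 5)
Your proof is correct and follows essentially the same route as the paper: the identity $n\,\cJ_{\omega_0}=I-J$ (your $n(E-E_{\omega_0})=-(I-J)$) reduces Chen's formula to $M=\Ent-(I-J)+O(1)$, the Donsker--Varadhan/Legendre bound together with the $\alpha$-invariant yields the entropy properness $\Ent\geq\a' I-C$ that the paper cites as (\ref{properness of entropy}), and the comparison $J\geq I/(n+1)$ finishes as in the text. The only difference is that you prove in-line the two ingredients the paper quotes from \cite{BBEGZ11}, and your bookkeeping via $-\a'E\geq\a'J$ is just an unwound form of the standard estimate $-V^{-1}\int_X\phi\,\omega_\phi^n\geq I(\phi)$ under the normalization $\sup_X\phi=0$.
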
 

This is a consequence of properness of the entropy: 
\begin{equation}\label{properness of entropy}
\Ent(V^{-1}\omega_\phi^n\vert V^{-1}\omega_0^n) 
\geq \a' I(\phi) -C, 
\end{equation} 
which is 
indeed true for arbitrary ample line bundle $L$ and $\a' <\a(L)$. 
If $L=-K_X$, one has simpler description 
of the K-energy 
$M=\Ent(V^{-1}\omega_\phi^n\vert V^{-1}\omega_0^n)
-n\cJ_{\omega_0}$ so applying (\ref{I vs J}) 
obtains Theorem \ref{properness in the Fano case}. 
we will generalaize the result 
to the nef line bundles 
in the next subsection. 

One can also obtain a purely 
algebraic interpletation of the alpha-invariant: 
\begin{equation}\label{algebraic definition of alpha}
    \a(L) = \sup\bigg\{ \lct(X, D): D \in \abs{L}_\Q \bigg\}. 
\end{equation} 
See \cite{Che08} for the detail and explicit 
computation for specific examples. 
The invariant $\a(L)$ is actually 
determined by the cohomology class $c_1(L)$. 
Thanks to the result in \cite{Dervan}, 
it is also continuous in the K\"ahler cone. 

The delta-invariat introduced in \cite{FO16} 
gives a refinement of the above alpha. 
\begin{dfn}
Let $L$ be a big line bundle. 
A $\Q$-divisor $D$ is called of $k$-basis type if 
there exists a basis $s_1, s_2, \dots, s_{N_k}$ of 
the vector space $H^0(X, kL)$ such that 
\begin{equation}
    D = \frac{1}{kN_k}\sum_{i=1}^{N_k} \div{(s_i)}. 
 \end{equation} 
We define $\d_k(L)$ as the supremum of the log canonical threshold $\lct(X, D)$ 
for $k$-basis type divisor and 
\begin{equation}\label{algebraic definition of delta}
 \d(L) := \limsup_{k \to \infty} \d_k(L). 
\end{equation} 
\end{dfn} 

It is immedeate $\d(L) \geq \a(L)$. 
On the other side \cite{BJ17} showed $\a(L) \geq \frac{1}{n+1}\d(L)$ and that  
if $L$ is ample $\d(L) \geq \frac{n+1}{n}\a(L)$ holds. 
The main result of \cite{FO16} states that 
a Fano manifold $X$ is uniformly K-stable 
(and hence admits a K\"ahler-Einstein metric ) 
if $\d(-K_X)>1 $. 
As it was shown by \cite{BJ17}, 
the condition $\d(-K_X)>1$ is indeed 
equivalent to the uniform K-stability. 
Continuity property of the delta-invariant 
in the big cone is proved by 
\cite{Zhang20}. 

We need more quantative analytic aspect  
in terms of the twisted K-energy functional. 
Let us take a smooth real $(1, 1)$ form $\theta$ 
and consider the ample line bundle 
$L$ such that $c_1(-K_X)=c_1(L)+[\theta]$. 
We fix $\omega_0 \in c_1(L)$ and 
take a function $\rho$ such that 
$\Ric(\omega_0) = \omega_0+\theta +\dd \rho_\theta$ 
with normalization $\int_X e^{\rho_\theta} \omega_0^n=V$.  
for $\phi \in\cH_{\omega_0}$ define the 
twisted K-energy as 

\begin{align}\label{twisted K-energy}
    M_{\theta} (\phi) 
    &=\Ent(V^{-1}\omega_\phi^n \vert V^{-1}\omega_0^n)
    -nE_{\Ric-\theta} +\hat{R}_\theta E \\
    &=\Ent(V^{-1}\omega_\phi^n \vert V^{-1}\omega_0^n)
    +n \cJ_{-\Ric +\theta}(\phi). 
\end{align} 

\begin{thm}[Consequence of \cite{Zhang20}, Proposition $3.6$ and \cite{Zhang21}, Theorem 2.2]\label{properess of the twisted K-energy}
    If $L$ is an ample line bundle and 
    $c_1(-K_X)=c_1(L)+[\theta]$ holds 
    for a (not necessarily positive) smooth form $\theta$. 
    Assume $\d(L)>1$. 
    Then the twisted K-energy (\ref{twisted K-energy}) is proper. 
\end{thm}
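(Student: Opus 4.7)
The plan is to deduce the properness of $M_\t$ from a quantitative entropy bound implied by $\d(L) > 1$, after first eliminating the twist $\t$ from the formula \eqref{twisted K-energy}.

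First I would exploit the normalization $\Ric(\omega_0) = \omega_0 + \t + \dd\rho_\t$ together with the linearity of $\cJ_\g$ in $\g$ to write
\begin{equation*}
n\cJ_{-\Ric + \t}(\phi) = -n\cJ_{\omega_0}(\phi) - n\cJ_{\dd\rho_\t}(\phi).
\end{equation*}
The second summand has its twisting form $\partial\bar\partial$-exact, so its $c$-constant vanishes and $n\cJ_{\dd\rho_\t}(\phi) = nE_{\dd\rho_\t}(\phi)$. Transferring $\dd$ from $\rho_\t$ to $\phi$ via integration by parts and using the telescoping identity $(\omega_\phi - \omega_0) \sum_{j=0}^{n-1}\omega_0^{j}\wedge \omega_\phi^{n-1-j} = \omega_\phi^n - \omega_0^n$, this evaluates to $V^{-1}\int_X \rho_\t(\omega_\phi^n - \omega_0^n)$, which is uniformly bounded by $2\norm{\rho_\t}_{L^\infty}$. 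Therefore
\begin{equation*}
M_\t(\phi) \geq \Ent(V^{-1}\omega_\phi^n \mid V^{-1}\omega_0^n) - n\cJ_{\omega_0}(\phi) - C_0.
\end{equation*}

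Next I would invoke the analytic core of the argument, which combines \cite{Zhang20} Proposition 3.6 with \cite{Zhang21} Theorem 2.2: the assumption $\d(L) > 1$ implies the quantitative entropy estimate
\begin{equation*}
\Ent(V^{-1}\omega_\phi^n \mid V^{-1}\omega_0^n) - n\cJ_{\omega_0}(\phi) \geq \e \cdot I(\phi) - C_{\e}
\end{equation*}
for some $\e > 0$. (A direct computation gives $n\cJ_{\omega_0}(\phi) = I(\phi) - J(\phi)$, so by \eqref{I vs J} one has $\tfrac{1}{n+1}I \leq n\cJ_{\omega_0} \leq I$; this comparison is used implicitly in translating the delta-invariant bound into the above form.) Combined with the first step, this gives the desired properness $M_\t \geq \e I - C'$.

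The main obstacle is the quantitative entropy estimate in the second step. The classical Tian $\a$-invariant route produces only $\Ent \geq \a(L) \cdot I - C$, and the corresponding properness threshold in the twisted setting would demand $\a(L) > n/(n+1)$---strictly stronger than $\d(L) > 1$. Replacing $\a$ by $\d$ requires the basis-type divisor machinery of \cite{FO16} and its analytic translation via the non-Archimedean pluripotential theory of Berman--Boucksom--Jonsson. The feature specific to our setting---namely the possible non-positivity of $\t$, essential because $-K_X$ is only assumed nef---is precisely what \cite{Zhang21} Theorem 2.2 supplies, by showing that the entropy estimate survives in the presence of a negative twist.
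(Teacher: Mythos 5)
Your proposal is correct, and it is consistent with the paper, which in fact offers no proof of this statement at all: the theorem is imported wholesale as a ``consequence of \cite{Zhang20}, Proposition 3.6 and \cite{Zhang21}, Theorem 2.2,'' so the analytic core you delegate to those references is exactly what the paper delegates. Your explicit reduction --- using $-\Ric(\omega_0)+\theta=-\omega_0-\dd\rho_\theta$, the vanishing of $c_{\dd\rho_\theta}$, and the telescoping identity to show $M_\theta=\Ent(V^{-1}\omega_\phi^n\vert V^{-1}\omega_0^n)-n\cJ_{\omega_0}(\phi)+O(1)$ with the $O(1)$ controlled by $\norm{\rho_\theta}_{L^\infty}$, together with $n\cJ_{\omega_0}=I-J$ --- is accurate and actually supplies bookkeeping the paper omits.
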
 

Notice that the convexity of the twisted K-energy 
(\ref{twisted K-energy}) fails if $\theta$ is not positive. 
See \cite{BDL15}, Thoerem 1.2. 
In fact Theorem \ref{existence of twisted KE} exploits the properness of the twisted Ding energy fuctional. 

\section{Proof of Theorem \ref{Theorem A}}
\label{proof}
Let $\omega$ be an arbitrary K\"ahler metric and 
denote its Riemannian, Ricci, and scalar curvature tensor by 
$\Rm(\omega), \Ric(\omega)$, and $R(\omega)$. 
The normalized curvature tensor is defines by 
\begin{equation}
    \widetilde{\Rm}(\omega)
    := R_{i\bar{j}k\bar{\ell}}
    -\frac{R(\omega)}{n(n+1)}
    (g_{i\bar{j}}g_{k\bar{\ell}} +g_{i{\bar{\ell}}}g_{k\bar{j}}), 
    \ \ \ 
    \widetilde{Ric}(\omega)
    := \Ric(\omega) -\frac{R(\omega)}{n}\omega. 
  \end{equation} 

We starts from the fundamental identity 
which is a basis of the Miyaoka-Yau inequaltiy in \cite{CO75}, \cite{Yau77}. 

\begin{prop}[\cite{CO75}]\label{fundamental identity}
    For a K\"ahler manifold $(X, \omega)$ 
     we have the identity 
\begin{align}
    &\bigg\{ 2(n+1)c_2(X) -n c_1^2(X) \bigg\} [\omega]^{n-2} \\
    &=
    \frac{1}{4\pi^2n(n-1)}
    \int_X \bigg\{ (n+1)\abs{ \widetilde{\Rm}(\omega) }^2
    -(n+2)\abs{\widetilde{\Ric}(\omega)}^2 
    \bigg\} \omega^n. 
\end{align} 
\end{prop}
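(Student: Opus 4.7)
The plan is to combine the Chern--Weil representation of $c_1(X)$ and $c_2(X)$ with two standard Kähler-geometric calculations: the formula expressing the wedge product of two $(1,1)$-forms against $\omega^{n-2}$ in terms of pointwise contractions, and the orthogonal decomposition of the Riemann tensor into its scalar and trace-free parts.

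I would begin by representing the characteristic classes at the form level via the Chern connection on $T_X$. If $\rho=\Ric(\omega)$ denotes the Ricci form and $\Theta=(R^i{}_{jk\bar\ell}\,dz^k\wedge d\bar z^\ell)$ the full curvature matrix, then $c_1(X)=\rho/(2\pi)$ and $c_2(X)=\tfrac{1}{8\pi^2}\bigl(\rho\wedge\rho+\Tr(\Theta\wedge\Theta)\bigr)$, so each side of the target identity becomes an integral of a pointwise polynomial in the curvature wedged against $\omega^{n-2}$. Applying the standard contraction identity
\[
\a\wedge\b\wedge\omega^{n-2}=\frac{1}{n(n-1)}\bigl((\Tr_\omega\a)(\Tr_\omega\b)-\langle\a,\b\rangle_\omega\bigr)\,\omega^n
\]
for real $(1,1)$-forms to $\a=\b=\rho$ gives
\[
c_1^{2}(X)\cdot[\omega]^{n-2}=\frac{1}{4\pi^{2}n(n-1)}\int_X\bigl(R(\omega)^{2}-\abs{\Ric(\omega)}^{2}\bigr)\,\omega^n,
\]
and a parallel but longer computation for the $(2,2)$-form $\Tr(\Theta\wedge\Theta)\wedge\omega^{n-2}$, carried out in a unitary frame diagonalizing $g$ at a point, produces
\[
c_2(X)\cdot[\omega]^{n-2}=\frac{1}{8\pi^{2}n(n-1)}\int_X\bigl(R(\omega)^{2}-2\abs{\Ric(\omega)}^{2}+\abs{\Rm(\omega)}^{2}\bigr)\,\omega^n.
\]

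The final step is pure linear algebra. Orthogonality of the trace and trace-free parts gives $\abs{\widetilde{\Ric}}^{2}=\abs{\Ric}^{2}-R(\omega)^{2}/n$, and using that the auxiliary tensor $g_{i\bar j}g_{k\bar\ell}+g_{i\bar\ell}g_{k\bar j}$ has squared pointwise norm $2n(n+1)$ and is orthogonal to $\widetilde{\Rm}$ yields $\abs{\widetilde{\Rm}}^{2}=\abs{\Rm}^{2}-2R(\omega)^{2}/(n(n+1))$. Substituting these into the combination $2(n+1)c_2(X)-nc_1^{2}(X)$, the $R(\omega)^{2}$ contributions cancel exactly and the coefficients of $\abs{\widetilde{\Rm}}^{2}$ and $\abs{\widetilde{\Ric}}^{2}$ come out to $(n+1)$ and $-(n+2)$, recovering the stated identity. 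The principal obstacle I anticipate is the middle step for the $(2,2)$-form: reducing $\Tr(\Theta\wedge\Theta)\wedge\omega^{n-2}$ to the clean combination of $\abs{\Rm}^{2}$, $\abs{\Ric}^{2}$, $R(\omega)^{2}$ above requires careful bookkeeping of four-index contractions; everything else is either a direct application of the $(1,1)$-wedge lemma or short algebra.
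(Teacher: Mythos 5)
Your proposal is correct. The paper gives no proof of this proposition --- it is quoted directly from the cited reference of Chen--Ogiue --- and your argument is precisely the standard one behind that reference: Chern--Weil representatives, the contraction identity $\a\wedge\b\wedge\omega^{n-2}=\tfrac{1}{n(n-1)}\bigl((\Tr_\omega\a)(\Tr_\omega\b)-\langle\a,\b\rangle_\omega\bigr)\omega^n$ applied to $c_1^2$ and (after the longer four-index bookkeeping) to $c_2$, followed by the orthogonal decomposition of $\Rm$ and $\Ric$. Your final algebra checks out: with $\abs{\widetilde{\Ric}}^2=\abs{\Ric}^2-R^2/n$ and $\abs{\widetilde{\Rm}}^2=\abs{\Rm}^2-2R^2/(n(n+1))$, the $R^2$ contributions in $2(n+1)c_2-nc_1^2$ cancel exactly and the coefficients $(n+1)$ and $-(n+2)$ emerge as stated.
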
 
Noting 
\begin{equation}
    \abs{\widetilde{\Ric}(\omega)}^2
=\abs{\Ric(\omega)}^2 -\frac{R^2}{n}
\leq \abs{\Ric(\omega)-\omega}^2 
\end{equation} 
we obtain the inequality 
    \begin{align}\label{Chen-Ogiue}
        &\bigg\{ 2(n+1)c_2(X) -n c_1^2(X) \bigg\} [\omega]^{n-2} \\
        &\geq 
        \frac{1}{4\pi^2n(n-1)}
        \int_X \bigg\{ (n+1)\abs{ \widetilde{\Rm}(\omega) }^2
        -(n+2)\abs{\Ric(\omega)-\omega}^2 
        \bigg\} \omega^n,  
    \end{align} 
as a corollary of Proposition \ref{fundamental identity}. 
If further $\omega$ is K\"ahler-Einstein metric 
(with positive Ricci curvature), 
the above computation immediately implies the inequality (\ref{MY}). 

Not only that, to show the inequality (\ref{MY}) in more general setting, 
it is sufficient to obtain a sequence of 
K\"ahler metrics $\omega_j$ such that 
\begin{equation}\label{approximately KE}
    \lim_{j \to \infty} [\omega_j] = 2\pi c_1(-K_X), 
    \ \ \ 
    \lim_{j \to \infty} \int_X \abs{\Ric(\omega_j) - \omega_j} =0.   
\end{equation} 

Let us give another approach 
when $-K_X$ is nef and big. 
Assuming the stability condition $\d(-K_X)>1$, 
Theorem 1.1 and 1.2 of \cite{Xu} asserts that 
the anticanonical ring is finitely generated. 
Moreover, the anti-canonical model 
$X_\ac := \Proj \oplus_{k \in \N} H^0(X, -kK_X)$ 
is $\Q$-Fano variety and 
inherits the condition $\d(-K_{X_\ac})>1$. 
It implies that the variety 
admits a singular K\"ahler-Einstein metric. 
Applying \cite{DGP}, Theorem B, 
one may conclude that 
 the canonical extension of the tangent bundle $T_X$ is 
 slope semistable with respect to 
 the nef and big class $c_1(X)$ 
 (see section \ref{stability of the holomorphic tangent bundle} 
 of this article). 
 The slope semistability implies the 
 Bogomolov-Gieseker inequality. 
 Therefore we reprove Theorem \ref{Theorem A},
  in the case $-K_X$ is nef and big. 

Now assume the equality in (\ref{MY}) holds. 
As \cite{GKPT19}, Proposition $8.2$, 
$X_{\ac}$ inherits the equality condition. 
The uniformization result 
\cite{GKP20}, Theorem 1.3 asserts 
that there exists a finite codimension-one \'etale cover 
$\P^n \to X_\ac$. 

\section{Proof of Theorem \ref{existence of cscK}}
\label{proof}

Let us prove 
the first part of Theorem \ref{existence of cscK}. 

\begin{thm}\label{cscK implies MY}
    If each $ 2\pi c_1(-K_X) +\e_j c_1(A)$ admits 
    a cscK metric for some 
    sequence $\e_j \to 0$, then 
    the inequality (\ref{MY}) holds. 
\end{thm}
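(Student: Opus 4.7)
My plan is to apply Proposition \ref{fundamental identity} to the cscK metrics $\omega_j$, discard the nonnegative $|\widetilde{\Rm}|^2$ term, and pass to the limit $\e_j\to 0$ using the observation that for a cscK metric the quantity $\int_X |\widetilde{\Ric}(\omega_j)|^2 \omega_j^n$ is a topological invariant of the class $[\omega_j]$ which vanishes at the limiting class $2\pi c_1(-K_X)$.

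Concretely, I will combine the pointwise algebraic identity
\begin{equation*}
\Ric(\omega)\wedge\Ric(\omega)\wedge\omega^{n-2}
= \frac{1}{n(n-1)}\bigl(R(\omega)^2 - |\Ric(\omega)|^2\bigr)\,\omega^n
\end{equation*}
with the Chern--Weil evaluation $\int_X \Ric(\omega)\wedge\Ric(\omega)\wedge\omega^{n-2} = (2\pi c_1(X))^2\cdot [\omega]^{n-2}$ to derive
\begin{equation*}
\int_X |\widetilde{\Ric}(\omega)|^2 \omega^n
= \frac{n-1}{n}\int_X R(\omega)^2 \omega^n \;-\; n(n-1)\bigl(2\pi c_1(X)\bigr)^2\cdot [\omega]^{n-2}.
\end{equation*}
When $\omega = \omega_j$ is cscK, the first integral simplifies to $\frac{n-1}{n}\hat{R}_j^2 V_j$, and the normalization $\hat{R}_j V_j = n\cdot 2\pi c_1(X)\cdot [\omega_j]^{n-1}$ then shows the whole expression depends only on $[\omega_j]$. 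Both terms are continuous in $[\omega_j]$ and coincide at $[\omega_j] = 2\pi c_1(-K_X)$ (there $\hat{R}=n$ and the two terms each equal $n(n-1)(2\pi)^n c_1(X)^n$), so their difference $\int_X |\widetilde{\Ric}(\omega_j)|^2\omega_j^n$ tends to $0$ as $\e_j \to 0$.

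With this in hand, Proposition \ref{fundamental identity} combined with the nonnegativity of $|\widetilde{\Rm}(\omega_j)|^2$ produces
\begin{equation*}
\{2(n+1)c_2(X) - nc_1^2(X)\}\cdot [\omega_j]^{n-2}
\;\geq\; -\frac{n+2}{4\pi^2 n(n-1)}\int_X |\widetilde{\Ric}(\omega_j)|^2\omega_j^n,
\end{equation*}
and letting $j\to\infty$ the left-hand side converges by continuity of cup product on $H^*(X,\R)$ to $(2\pi)^{n-2}\{2(n+1)c_2(X)-nc_1^2(X)\}c_1(X)^{n-2}$, while the right-hand side converges to $0$. Since $(2\pi)^{n-2}>0$, this yields (\ref{MY}).

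The main obstacle is conceptual rather than analytic: noticing that on a cscK metric, Chern--Weil turns the $L^2$-norm of the traceless Ricci into a purely topological quantity, which happens to vanish in the limiting class $2\pi c_1(-K_X)$. No estimate on $\widetilde{\Rm}(\omega_j)$ is required, and the argument genuinely needs $\omega_j$ to solve the cscK equation exactly (so that $R(\omega_j)$ is pointwise equal to $\hat{R}_j$); if scalar curvature were merely close to constant in $L^2$, additional analytic input would be necessary.
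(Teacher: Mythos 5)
Your overall strategy is the same as the paper's: use the Chern--Weil identity to convert the $L^2$-norm of the traceless Ricci tensor (the paper uses $\abs{\Ric(\omega_\e)-\omega_\e}^2$, which dominates $\abs{\widetilde{\Ric}}^2$, but this is cosmetic) into $\frac{n-1}{n}\int_X R^2\omega^n$ minus a topological term, use the cscK equation to replace $\int_X R^2\omega^n$ by $\hat{R}_j^2V_j$, and observe that the two terms cancel in the limit. Your final step of feeding this into Proposition \ref{fundamental identity} and discarding $\abs{\widetilde{\Rm}}^2$ is exactly how the paper concludes via (\ref{Chen-Ogiue}).

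However, there is a gap at the one genuinely delicate point, namely the degenerate case $c_1(X)^n=0$. You assert that the term $\frac{n-1}{n}\hat{R}_j^2V_j=4\pi^2n(n-1)\,\frac{(c_1(X)\cdot[\omega_j]^{n-1})^2}{[\omega_j]^n}$ is ``continuous in $[\omega_j]$'' and that $\hat{R}=n$ at the limiting class. This is only true when $c_1(X)^n\neq 0$. When $c_1(X)^n=0$ the limiting class satisfies $[\omega]^n=0$, so this expression is of the form $0/0$ at $2\pi c_1(-K_X)$ and is not a priori continuous there; moreover $\hat{R}_j$ does \emph{not} tend to $n$ but to the numerical dimension $\nu(-K_X)<n$. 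The limit (\ref{MY}) is not vacuous in this case (e.g.\ for $n=2$ it still asserts $3c_2(X)\geq c_1^2(X)$), so the case cannot be dismissed. The fix is precisely the paper's Lemma \ref{key lemma}: expanding $[\omega_j]=2\pi c_1(X)+\e_jc_1(A)$ in powers of $\e_j$ shows $\hat{R}_j\to\nu(-K_X)$, hence $\hat{R}_j$ stays bounded, and since $V_j=[\omega_j]^n\to(2\pi)^nc_1(X)^n=0$ one gets $\hat{R}_j^2V_j\to 0$, matching the (vanishing) limit of the second term. With that additional input your argument closes; without it, the step ``their difference tends to $0$'' is unjustified exactly when $-K_X$ fails to be big.
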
 

\begin{dfn} 
    We define the numerical dimension of 
    a nef line bundle $L$ by 
    \begin{equation}
        \nu(L)
        =\max{\bigg\{k=0, 1, 2, \dots, n 
        \ \ \text{such that} \ \ 
        L^k A^{n-k} \neq 0. \bigg\}}. 
     \end{equation} 
     It does not depend on the choice 
     of an ample divisor $A$. 
    \end{dfn} 

If $L$ is nef and big, 
one has $\nu(L)=n$. 
The following key lemma 
links the scalar curvature  
to the numerical dimension.  

\begin{lem}[\cite{Liu20}, Lemma $2.1$]\label{key lemma}
    Let $L$ be a nef line bundle and 
     $\omega_\e$ be a sequence 
     of 
     K\"ahler metric with cohomology class $ 2\pi c_1(L) +\e c_1(A)$.   
   \begin{equation}
    \lim_{\e \to 0}\frac{ 2\pi n c_1(L)[\omega_\e]^{n-1}}{[\omega_\e]^n}
    = \nu(L)  
   \end{equation}
\end{lem}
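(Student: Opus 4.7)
The plan is to prove the lemma by a direct binomial expansion in $\e$, using only the definition of the numerical dimension. Write $\alpha := c_1(L)$ and $\beta := c_1(A)$, so that $[\omega_\e] = 2\pi \alpha + \e \beta$. Expanding,
\begin{equation}
[\omega_\e]^n = \sum_{k=0}^n \binom{n}{k} (2\pi)^k \e^{n-k} \alpha^k \beta^{n-k},
\end{equation}
and, after factoring an $\alpha$ out of $2\pi n \, c_1(L) \cdot (2\pi\alpha + \e\beta)^{n-1}$ and reindexing,
\begin{equation}
2\pi n \, c_1(L) [\omega_\e]^{n-1} = n \sum_{j=1}^{n} \binom{n-1}{j-1} (2\pi)^j \e^{n-j} \alpha^j \beta^{n-j}.
\end{equation}

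Next I would invoke the definition of $\nu := \nu(L)$: by construction $\alpha^k \beta^{n-k} = 0$ for every $k > \nu$, while $\alpha^\nu \beta^{n-\nu} > 0$ (note that non-negativity of intersections of nef classes with ample classes makes the vanishing unambiguous). Assuming first that $\nu \geq 1$, the smallest power of $\e$ carrying a nonzero coefficient in each expansion is $\e^{n-\nu}$, contributed by $k = \nu$ in the denominator and $j = \nu$ in the numerator. Dividing this common factor out, taking $\e \to 0$, and canceling the common $(2\pi)^\nu \alpha^\nu \beta^{n-\nu}$ yields
\begin{equation}
\lim_{\e \to 0} \frac{2\pi n \, c_1(L)[\omega_\e]^{n-1}}{[\omega_\e]^n}
= \frac{n \binom{n-1}{\nu-1}}{\binom{n}{\nu}} = \nu,
\end{equation}
using the elementary identity $n \binom{n-1}{\nu-1} = \nu \binom{n}{\nu}$.

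The boundary case $\nu = 0$ must be handled separately: then $\alpha^j \beta^{n-j} = 0$ for every $j \geq 1$, so the entire numerator vanishes identically, while the denominator behaves like $\beta^n \e^n \neq 0$; the limit is $0 = \nu$, again consistent with the claim. I do not anticipate a genuine obstacle here: once the definition of numerical dimension is unpacked, everything reduces to bookkeeping in the binomial expansion and a one-line combinatorial identity, with the only mildly subtle point being the need to treat $\nu = 0$ by hand so as not to divide by an empty sum.
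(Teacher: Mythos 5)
Your proposal is correct and follows essentially the same route as the paper: both expand $[\omega_\e]^n$ and $2\pi c_1(L)[\omega_\e]^{n-1}$ binomially in $\e$, identify the lowest-order nonvanishing term via the definition of $\nu(L)$, and take the ratio as $\e\to 0$. You merely spell out the details the paper leaves implicit (the identity $n\binom{n-1}{\nu-1}=\nu\binom{n}{\nu}$ and the degenerate case $\nu=0$), which is a harmless refinement rather than a different argument.
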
 
\begin{proof}
    While the proof is totally the same as 
    the case $L=K_X$, 
    we repeat it for the convenience to the reader.
    Take $\eta \in 2\pi c_1(L)$ and 
    $\gamma \in c_1(A)$ as real $(1,1)$-form representatives 
 for each cohomology class. 
 We may assume that $\gamma$ is positive. 
 It is enough to compare 
    \begin{align}
        [\omega_\e]^n 
        =\int_X(\eta +\e \gamma)^n 
        =\sum_{i=0}^n 
        \binom{n}{i} (2\pi)^{i}\e^{n-i} \int_X 
        c_1(L)^i \wedge \gamma^{n-i}
    \end{align} 
  with 
    \begin{align}
         2\pi c_1(L)[\omega_\e]^{n-1}
        =\sum_{i=0}^{n-1}
        \binom{n-1}{i} (2\pi)^{i+1}\e^{n-i-1} \int_X 
        c_1(L)^{i+1} \wedge \gamma^{n-i-1}. 
    \end{align}
 Taking the ratio of these two 
 and letting $\e \to 0$ we obtain the claim. 
\end{proof} 

If $L=-K_X$ and $\omega_\e$ is cscK, 
the above lemma yields 

\begin{equation}
    R(\omega_\e) =\hat{R}_\e 
    =\frac{2\pi n c_1(X)[\omega_\e]^{n-1}}{[\omega_\e]^n}
    \to  \nu(-K_X). 
\end{equation} 

It in particular implies that 
$R(\omega_\e)$ is bounded in $\e$. 

If one computes the asymptotic of 
the Calabi-type functional, 

\begin{align*}
    \int_X \abs{\Ric(\omega_\e) -\omega_\e}^2 \omega_\e^n 
    &=\int_X \bigg\{ \abs{\Ric(\omega_\e)}^2 
    -2R(\omega_\e) +n 
    \bigg\} \omega_\e^n \\
    &=\int_X \bigg\{ \abs{R(\omega_\e)}^2 
    -2R(\omega_\e) +n 
    \bigg\} \omega_\e^n  
    -4\pi^2 n(n-1)c_1^2(X)[\omega_\e]^{n-2} 
\end{align*}
converges to 
\begin{equation}\label{limit}
    ((\nu^2 -2\nu +n)-n(n-1))(2\pi)^n c_1(X)^n.  
\end{equation} 
If $c_1(X)^n=0$ the limit (\ref{limit}) is obviously zero. 
If it is not the case, we have $\nu =n$ 
by the definition of the numerical dimension 
so the limit still vanishes. 
Thus we obtain the sequence $\omega_j :=\omega_{\e_j}$ which 
satisfies the condition (\ref{approximately KE}), 
and conclude Theorem \ref{cscK implies MY}. \\

Now we discuss the second part of Theorem \ref{existence of cscK}. 
It can be compared with the nef canonical bundle case \cite{Song20}, \cite{Dy20}. 

We compute the second and the third terms 
of (\ref{twisted K-energy}), in the case $L_\e=-K_X +\e A$. 
One has to be careful for the dependence of $L_\e$ 
and the space $\cH_{\omega_0}$ 
on $\e$. 
From Theorem \ref{properess of the twisted K-energy}, 
we have the properness of the twisted K-energy: 

\begin{equation}
 M_\theta \geq \lambda \cJ_{\omega_0}. 
\end{equation} 

More precisely, one may take 
$\lambda = \frac{a(d-1)}{d-a}$
with $d \in (1, \d(L; \theta))$ 
and $0< a <\min\{1, \a(L; \theta)\}$. 
In our situation 
$\a(L; \theta)=\a(L)$ and 
$\d(L; \theta) =\d(L) >1$ 
so that  
we obtain the lower bound 
$\lambda \geq \lambda_0$ which is uniform in $\e$. 
To show properness of the ordinal K-energy, 
it is enough to show the properness of 
$\cJ_{\frac{\lambda}{n} \omega_0  -\theta}$. 
From the recent study of the J-equation, 
such properness of the modified J-functional is 
related to J-stability. 

\begin{thm}[\cite{Chen21}. See also \cite{Song20}.]\label{properness of J-energy}
Assume $\gamma$ is a positive form. 
 The modified J-functional $\cJ_\gamma$ 
 is proper 
 if there exists a K\"ahler form 
 $\omega \in c_1(L)$ such that 
 for any $1 \leq p \leq n-1$ and $p$-dimenstional subvariety 
 $V$ Nakai-Moischezon type inequality 
    \begin{equation}\label{NM inequality}
\int_V  (nc_\gamma L-p[\gamma]) L^{p-1} 
\geq  \delta (n-p)\int_V L^p     
    \end{equation} 
holds. 
\end{thm}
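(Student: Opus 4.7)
The plan is to reduce properness of $\cJ_\gamma$ to a non-Archimedean slope inequality, and then to deduce that inequality from the Nakai-Moishezon hypothesis. Since $\cJ_\gamma$ is convex along finite-energy geodesics in $\cE^1(X,[\omega])$ and invariant under $\phi\mapsto\phi+c$, properness is equivalent to a uniform strictly positive lower bound on its asymptotic slope along unit-speed geodesic rays issuing from $\omega_0$. By the density of test-configuration rays among finite-energy geodesic rays (following Boucksom-Jonsson and Darvas-Lu), it suffices to prove the slope inequality for rays produced by a normal ample test configuration $(\cX,\cL)/\A^1$, for which one has the intersection-theoretic slope formula
\begin{equation*}
\lim_{t\to\infty}\frac{\cJ_\gamma(\phi_t)}{t}\;=\;\cJ_\gamma^{\NA}(\cX,\cL),
\end{equation*}
where the right-hand side is an intersection number on the compactified total space built from $\overline{\cL}$, the natural extension of $\gamma$, and the constant $c_\gamma$.

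The core step is then to show that the subvariety hypothesis on $X$ forces $\cJ_\gamma^{\NA}(\cX,\cL)\geq \delta'\,\|(\cX,\cL)\|_{\NA}$ uniformly. The mechanism is a Demailly-P\u{a}un type positivity argument: one rewrites $\cJ_\gamma^{\NA}$ as an intersection number involving the relative class $nc_\gamma\overline{\cL}-[\gamma]_{\cX}$ on a flattened model of $\cX$, and shows inductively on codimension that positivity of $nc_\gamma L-p[\gamma]$ on all $p$-dimensional subvarieties of $X$, together with the numerical gap $\delta(n-p)\int_V L^p$, forces positivity of the relative class on every subvariety of the total space. The central fiber of $\cX\to\A^1$ is stratified according to the dimension of its image in $X$ under the generic fiber identification, and the induction transports the quantitative gap $\delta$ from layer to layer.

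The main obstacle is the geometric bookkeeping on the central fiber: the hypothesis lives on subvarieties of $X$, while $\cJ_\gamma^{\NA}$ naturally sees intersection numbers on the total space $\cX$, whose central-fiber components need not be birational to any subvariety of $X$. Controlling the contribution of these vertical components requires either a careful analysis of the relative numerical decomposition of $\cL$, as in Chen's approach via Song-Weinkove continuity paths, or an Okounkov-body/Chebyshev-transform re-expression that reduces the positivity to a pointwise inequality of concave transforms on Newton-Okounkov bodies. Once this uniform non-Archimedean inequality is in hand, the asymptotic slope formula together with convexity of $\cJ_\gamma$ along finite-energy geodesic rays upgrades it directly to properness, with linear lower bound in the $I$-functional norm.
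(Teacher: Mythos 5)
The paper does not actually prove this statement: it is imported wholesale from \cite{Chen21} (see also \cite{Song20}), where the argument is analytic --- solvability of the J-equation under the uniform Nakai--Moishezon hypothesis is established by a continuity-path and mass-concentration argument with an induction on dimension, and properness of $\cJ_\gamma$ is then extracted from existence. Your proposal attempts a different, purely non-Archimedean route: reduce properness to a uniform lower bound $\cJ_\gamma^{\NA}(\cX,\cL)\geq\delta'\,\|(\cX,\cL)\|_{\NA}$ over all ample test configurations, then deduce that bound from the subvariety inequality (\ref{NM inequality}). The first reduction is plausible but already nontrivial: it needs convexity of $\cJ_\gamma$ along finite-energy geodesics (true here since $\gamma\geq 0$), the Darvas--Lu geodesic-stability characterization of properness, and a density statement for test-configuration rays along which the $\cJ_\gamma$-slope actually converges; for energy-type functionals this can be assembled from \cite{BHJ19}-type asymptotics, so I would not call this half a gap, only unfinished bookkeeping.

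The genuine gap is your ``core step.'' The implication from (\ref{NM inequality}) on all subvarieties of $X$ to the uniform positivity of $\cJ_\gamma^{\NA}$ on \emph{all} test configurations is essentially the uniform Lejmi--Sz\'ekelyhidi conjecture in its test-configuration form, and it does not follow from a routine Demailly--P\u{a}un style induction on strata. The obstruction you yourself identify --- components of the central fiber of $\cX$ that are contracted or not birational to any subvariety of $X$ --- is precisely where every known direct algebraic attempt stops: the intersection numbers of $n c_\gamma\overline{\cL}-[\gamma]_\cX$ against such vertical components are not controlled by intersection numbers on $X$, so the quantitative slack $\delta$ does not ``transport from layer to layer.'' Your two proposed remedies do not close this: the ``Song--Weinkove continuity path'' is not a completion of your induction but the entirely different PDE proof of \cite{Chen21}, and the Okounkov-body re-expression is asserted, not carried out. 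As written, the proposal therefore replaces the hard step by a pointer to the theorem it is meant to prove. A correct writeup should either follow the analytic argument of \cite{Chen21}/\cite{Song20}, or restrict to the special test configurations (deformations to normal cones of subvarieties) for which the slope genuinely reduces to (\ref{NM inequality}) --- but then one must separately justify why those suffice for properness, which is exactly the open point.
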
 

We apply the above theorem to $L=L_\e$ and 
$\gamma \in c_1(L+\e' A )$, with 
$\e' =\frac{n}{\lambda} \e$. 

\begin{prop}\label{check the NM condition}
    Assume $-K_X$ is nef and $\nu(-K_X)=1$. 
    If we choose $L=L_\e$ and $\gamma \in c_1(L+\e' A )$, 
    the inequality (\ref{NM inequality}) holds. 
\end{prop} 
\begin{proof} 
    In the same way as Lemma \ref{key lemma}, 
    we may compute 
\begin{align}
    nc_\gamma 
    &=n \frac{[\gamma]L^{n-1}}{L^n}
    =n \frac{(L_\e +\e'A)L_\e^{n-1}}{L_\e^n}
    = n +\e'n \frac{A(-K_X+\e A)^{n-1}}{(-K_X +\e A)^n}\\
    &=
    \begin{cases} n+O(\e) & \text{if $-K_X$ is big,} \\
        n+\frac{n}{\lambda}(n-\nu(-K_X)) +O(\e) 
        &  \text{otherwise.}
    \end{cases}
\end{align}
The $O(\e)$ term is actually nonnegative. 
It yields  
\begin{align}
    \int_V  (nc_\gamma L-p[\gamma]) L^{p-1}  
    \geq \int_V \bigg\{ (n-p)L +\frac{n}{\lambda}(n-\nu(-K_X) -p)\e A \bigg\}L^{p-1}
\end{align}
and the right-hand side has a desired lower bound 
in the case $\nu(-K_X)=1$. 

\end{proof} 

Applying Theorem \ref{CC}, 
we deduce that each class $c_1(-K_X +\e A)$ admits 
a cscK metric. 
This completes the proof of Theorem \ref{existence of cscK}.

The argument in the proof of Proposition \ref{check the NM condition} 
shows that the condition of Theorem \ref{properness of J-energy} 
fails in the case $\nu(-K_X)=n=2$.

\section{Surface examples}

Let us consider the following two class of 
surfaces. 
\begin{setting}\label{surfaces}
    Let $X$ be a smooth complex 
    projective surface 
    with nef anti-canonical line bundle such that $\nu(-K_X)=1$. 
    We particularly consider one of the following cases. 
    \begin{itemize}
        \item[$(1)$]
        $X$ is a geometrically ruled surface 
        over an elliptic curve $B$. 
        If it is abundant, namely if $\kappa = \nu =1$, 
        after a finite \'etale cover $X \simeq B \times \P^1$
        or $X \simeq \P E$ 
        for some rank two slope semistable vetor bundle $E$. 
        Otherwise $\kappa=0, \nu=1$, and $X \simeq \P E$ 
        holds for some $E$
        which is $(a)$ a nontrivial extension of 
        $\cO$ by itself $Q \simeq \cO$, or 
        $(b)$ direct sum of $\cO$ with 
        some (non-torsion) degree zero 
        line bundle $Q$.  
        \item[$(2)$] 
        $X$ is obtained as a nine points 
        blowing up of $\P^2$. 
        It is abundant if $-K_X$ defines 
        an elliptic pencil. 
        If one choses very general nine points,   
        $\kappa=0, \nu=1$.  
    \end{itemize} 
\end{setting}
From the classification
results for surfaces, 
all examples of non-abundant surfaces are 
contained in the above.

We inquire the lower bound of 
\begin{equation}
\tilde{\a}(L) 
:= \limsup_{k\to \infty} \a(L+\e A), \ \ \ 
\tilde{\delta}(L) 
:= \limsup_{k\to \infty} \delta(L+\e A)
\end{equation}
for $L=-K_X$ in these examples. 
Note that the definition of $\tilde{\a}$ and 
$\tilde{\d}$ might depends on the choice 
of an ample divisor $A$. 
In the nef line bundle case 
we adopt (\ref{algebraic definition of alpha}), 
(\ref{algebraic definition of delta}) 
as the definition of $\a(L), \d(L)$ 
and distinguish $\a$ from $\tilde{\a}$.  
Since the line bundle is only nef, 
it happens $\a(L)>\tilde{\a}(L)$. 
There is yet another definition of 
the alpha invariant 
\begin{equation}
\a^{\an}(L) 
:= \sup\bigg\{ a>0: 
\text{$\exists C_a$ \ $\forall \phi \in \cH_{\omega_0}$ \ \ \ 
$\int_X e^{-a (\phi -\sup_X \phi)} \leq C_a$}
\bigg\}, 
\end{equation}
which is equivalent to $\a(L)$ if $L$ is ample. 
It is easy to check 
\begin{equation}
\a(L) \geq \a^\an(L) \geq 
\tilde{\a}(L)
\end{equation}
and the parallel inequalities for $\d$. 
but this is of course inadequate to get the 
lower bound for $\tilde{\a}$. 

\begin{thm} 
    In the situation of Setting \ref{surfaces} 
    we claim the following. 
    \begin{itemize}
\item[$(1)$]
 If $X$ is abundant, 
 we have $\tilde{\a}(-K_X)\geq 1$ so that $-K_X+\e A$ 
 admits a cscK metric for any ample divisor $A$. 
 Otherwise, in the case $(a)$ one has  
 $\tilde{\d}(-K_X) \leq 1/2$, $\tilde{\a}(-K_X) \leq 1/3$ 
 and $-K_X+\e A$ never admits cscK for any $A$. 
 In the case $(b)$ one has the border line 
 $\tilde{\d}(-K_X) \leq 1$ and $\tilde{\a}(-K_X) \leq 2/3$, 
 while $-K_X+\e A$ actually admits a cscK metric for an arbitrary ample $A$. 
\item[$(2)$] 
If the nine points blowing up of $\P^2$ is 
equipped with an ellptic pencil, 
$\tilde{\a}(-K_X)\geq 1$ so that $-K_X+\e A$ admits cscK for any $A$. 
Otherwise $\tilde{\d}(-K_X) \leq 1$ and $\tilde{\a}(-K_X) \leq 2/3$. 
    \end{itemize} 
\end{thm} 

The last part of $(2)$ is not fully understood. 
It is interesting to ask whether for 
the very general nine points   
$-K_X +\e A$ admits a cscK metric. 

\begin{proof}
    $(1)$ 
The idea to estimate $\tilde{\a}$ has much in common 
with \cite{Che08} and \cite{Dervan}. 
The elliptic pencil $X \to \P^1$ 
enables us to take a smooth curve 
$C \in \abs{-K_X}$ which passes through any 
fixed point $p \in X$. 
Let us assume, to the contrary, $\tilde{\a}(-K_X) < 1$.
It implies that one may take $a< 1$ and $D \sim_\Q -K_X +\frac{1}{m}A$ 
for a sufficiently large $m \in \N$ such that 
$(X, aD)$ is not log canonical at some point $p \in X$. 
Let us write $D=tC +D'$ with $t \in \Q$, $C \nsubseteq \supp D'$.
If $H$ denotes the hyperplane line bundle of $\P^2$, 
one computes 
\begin{equation}
    D'.H =(D-tC)H
    =(-(1-t)K_X+\frac{1}{m}A).H
    =3(1-t) +\frac{1}{m}A.H. 
\end{equation} 
the left-hand side is nonnegative 
so we may assume $t \leq 1+\frac{c}{m}$ for some small constant $c$, 
and hence $ta <1$. 
Since $(X, C)$ is log canonical at $p$, 
by \cite{Wilson}, Corollary 6, 
$(X, \frac{1}{1-ta}(aD-taC))$ is not log canonical at $p$. 
It follows $\mult_p(\frac{a}{1-ta}D')>1$ and hence 
\begin{equation}
aC.D' \geq a \mult_pD' >1-ta.  
\end{equation} 
It implies 
\begin{equation}
    a \cdot \frac{1}{m}A.C 
    =aD.C =a(tC^2+D'C) 
    >1-ta. 
\end{equation}
After  $m \to \infty$ 
one deduces $a\geq 1$, which is a contradiction. 

In general to give a section of $\P E \to B$ 
is equivalent to give a quotient of $E$. 
If $C$ denotes the section corresponding to 
the quotient line bundle $Q$ in the case $(a)$, 
it is well known that 
$\abs{-mK_X}=\{ 2mC\}$ holds for any $m \in \N$. 
From this result, direct computation yields $\d(-K_X)=\a(-K_X)=1/2$. 
Even though the line bundle $L=-K_X$ is nef, 
the property $\tilde{\d} \geq \frac{n+1}{n}\tilde{\a}$ 
can be proved by reduction to the ample case. 
Now $\d \geq \tilde{\d} \geq \frac{n+1}{n}\tilde{\a}$ 
implies $\tilde{\a}(-K_X) \leq 1/3$. 
In the splitting case $(b)$ 
one has two sections $C_1$ and $C_2$ so that 
$\abs{-mK_X}=\{ m(C_1+C_2)\}$. 
In the same manner we obtain $\d(-K_X)=\a(-K_X)=1$ 
and $\tilde{\a}(-K_X) \leq 2/3$. 

Existence of cscK for the geometrically ruled surfaces 
has been completely studied by \cite{Fujiki}. 
In particular, the case $(a)$ does not admit any cscK 
and the case $(b)$ does. 
When $X$ is abundant, 
one can also utilize Theorem \ref{existence of cscK} 
to obtan the cscK metric. 

$(2)$ 
Using the elliptic pencil, 
the completely pararell argument shows 
$\tilde{\a}(-K_X) \geq 1$. 
In this case Theorem \ref{existence of cscK} 
shows the existence of cscK metrics. 
(Since $\tilde{\d}(-K_X) \geq 3/2$, 
one may also apply  
\cite{Zhang21}, Theorem 2.4 to this setting.)
If $C$ denotes the   
strict transform of the cubic passing 
the nine points, one has
$\abs{-mK_X}= \{ mC\}$ for any $m \in \N$.
We obtain $\d(-K_X)=\a(-K_X)=1$ 
and $\tilde{\a}(-K_X) \leq 2/3$. 

\end{proof}

Notice that $\d(-K_X) \geq \frac{n+1}{n} \a(-K_X)$ 
does not hold in the above example. 
One can also points out that 
$\a(-K_X) >\tilde{\a}(-K_X)$ happens in the 
non-abundant case of $(1)$. 
As far as the author knows, 
these phenomena have not concretely 
observed in the literatures. 
\section{Stability of the holomorphic tangent bundle}
\label{stability of the holomorphic tangent bundle}

As it was classicaly known to \cite{Tian92}, 
if a Fano manifold $X$ admits a K\"ahler-Einstein metric, 
the holomorphic tangent bundle and its 
canonical extenstion sheaf:  
\begin{equation}
    0 \to \cO_X \to E \to T_X \to 0
\end{equation} 
defined by $c_1(-K_X) \in H^1(X, \Omega_X^1)\simeq \Ext^1(T_X, \cO_X)$
is slope semistable with respect to the anti-canonical polarization. 

Let us briefly review the terminology and Tian's idea. 
Fix a compact K\"ahler manifold $(X, \omega)$. 
For a given connection $D$ of the holomorphic vector bundle $E$ 
we denote the curvature by $F$ and define the mean curvature $K \in \End(E)$ 
so as to satisfy  
\begin{equation}\label{HYM}
    \frac{\sqrt{-1}}{2\pi} F \wedge \omega^{n-1}
    =K \omega^n \id_E. 
\end{equation} 
A connection $D$ of the vector bundle $E$ 
is said to be Hermitian Yang-Mills (shortly HYM) 
with respect to the K\"ahler form $\omega$, 
if there exists a constant $\mu \in \R$ such 
that $K= \mu \id_E$. 
The constant $\mu$ is determined 
by the cohomology classes as 
\begin{equation}
    \mu = \mu(E)=\frac{\int_X c_1(E) \cup [\omega]^{n-1}}{\rank{E} \cdot \int_X \omega^n}. 
\end{equation} 
A Hermitian fiber metric on $E$ is called Hermite-Einstein 
if the associated Chern connection $D$ is HYM. 
More generally, a sequence of Hermitian fiber metrics 
$H_\e$ with K\"ahler forms $\omega_\e$ gives  
\emph{approximate Hermite-Einstein structure} 
if it satisfies 
\begin{equation}
    \abs{  K_\e -\mu \id_E } \leq \e. 
\end{equation} 
Note that the present definition is 
slightly different from \cite{Kob87} where 
the K\"ahler form $\omega$ on the base space is fixed. 

\begin{dfn}\label{def of slope stability}
    Fix a nef and big line bundle $L$ and 
     an ample lin bundle $A$. 
    For any $\cO_X$-submodule $S \subset E$ 
    we define the slope 
\begin{equation}
    \mu(S) := \frac{\int_X c_1(S) \cup c_1(L)^{n-1}}{\rank{S} \cdot  \int_X c_1(L)^n}
\end{equation} 
The vector bundle $E$ is called slope semistable 
    if $\mu(S) \leq \mu(E)$ holds for any $\cO_X$-submodule $S \subset E$.    
\end{dfn}

\begin{thm}[\cite{Kob87}, Theorem 5.8.6]\label{approximate HE implies semistability}
    If the vector bundle $E$ admits an approximate Hermite-Einstein structure 
    with respect to a sequence of K\"ahler metrics $\omega_\e$ in $c_1(L+\e A)$, 
it is slope semistable. 
\end{thm}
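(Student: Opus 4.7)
The plan is to follow the classical Kobayashi argument for Hermite-Einstein bundles \cite{Kob87}, modifying it to handle the fact that the base K\"ahler metric $\omega_\e$ also varies with $\e$. The key observation is that the proof is quantitative in the pointwise approximate Hermite-Einstein error: one obtains an $\e$-dependent slope comparison for each $\e$, and then takes the limit using continuity of intersection numbers on the nef cone.

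Let $S \subset E$ be any coherent $\cO_X$-submodule of rank $r$. Standard theory shows that $S$ is locally free on a Zariski open set $U = X \setminus Z$ with $\codim_X Z \geq 2$, and on $U$ the restriction of $H_\e$ induces a Hermitian structure on $S\vert_U$. The Gauss-Codazzi identity for the mean curvature endomorphisms reads
\begin{equation}
\Tr K^S_\e = \Tr\bigl( K^E_\e\vert_S \bigr) - \abs{\beta_\e}^2,
\end{equation}
where $\beta_\e$ is the second fundamental form of the inclusion. By the Bando-Siu extension of Chern-Weil theory to torsion-free subsheaves (or, equivalently, by the Uhlenbeck-Yau weakly holomorphic projection argument), the codimension-two hypothesis ensures
\begin{equation}
c_1(S) \cdot [\omega_\e]^{n-1} = \int_U \Tr K^S_\e \cdot \omega_\e^n.
\end{equation}

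Applying the approximate Hermite-Einstein bound $\abs{K^E_\e - \mu_\e \id_E} \leq \e$ (with $\mu_\e = \mu_{\omega_\e}(E)$) on an orthonormal frame of $S$, and discarding the nonnegative term $\abs{\beta_\e}^2$, I obtain the pointwise estimate $\Tr K^S_\e \leq r(\mu_\e + \e)$. Integrating against $\omega_\e^n$ gives the $\e$-quantified slope comparison $\mu_{\omega_\e}(S) \leq \mu_{\omega_\e}(E) + \e$. Since $L$ is nef and big and $A$ is ample, the intersection numbers $c_1(S) \cdot [L+\e A]^{n-1}$ and $[L+\e A]^n$ are polynomials in $\e$ with the latter tending to $c_1(L)^n > 0$, so letting $\e \to 0$ yields $\mu_L(S) \leq \mu_L(E)$, which is the desired semistability.

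The main obstacle I anticipate is the Chern-Weil computation on the singular subsheaf $S$: one must justify Gauss-Codazzi globally and verify that the extension of $c_1(S)$ across $Z$ produces no boundary term. This rests on $\codim Z \geq 2$ together with an $L^2$ bound on $\beta_\e$ that is uniform in $\e$, which is exactly the point where passing from Kobayashi's fixed-metric setting to our varying-metric setting requires care. By comparison, the polynomial dependence of intersection numbers on the ample perturbation $\e A$ is elementary and requires no additional input.
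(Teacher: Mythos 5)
Your proposal is correct and follows essentially the same route as the paper, which simply invokes Kobayashi's argument (Gauss--Codazzi plus Chern--Weil for the saturation of $S$, then the pointwise approximate Hermite--Einstein bound) and passes to the limit $\e \to 0$ using polynomiality of the intersection numbers and $c_1(L)^n>0$. The only quibble is your final worry: no $L^2$ bound on $\beta_\e$ uniform in $\e$ is actually needed, since for each fixed $\e$ the term $-\abs{\beta_\e}^2$ is discarded and the Chern--Weil identity (valid for each $\e$ separately, with the integral possibly $-\infty$) already yields the finite algebraic inequality $\mu_{\omega_\e}(S)\leq \mu_{\omega_\e}(E)+\e$, after which the limit involves only intersection numbers.
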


The proof is totally the same as \cite{Kob87}. 

Next we explain the extension of the vector bundles. Fix the vector bundle $S, Q$ on $X$ 
and take $\a$ be a $\bar{\partial}$-closed smooth $(0, 1)$-form 
represents the class $[\a] \in H^1(X, Q^\vee \otimes S) \simeq \Ext^1(Q, S)$. 
Here $Q^\vee$ denotes the dual vector bundle. 
It defines the short exact sequence:  
\begin{equation}
    0 \to S \to E \to Q \to 0. 
\end{equation} 
If one further takes a smooth fiber metric of $S, Q$, 
it defines the Chern connection $D_S, D_Q$ and 
the form $\a^*$ valued at the adjoint endomorphism. 

\begin{thm}[See \cite{Tian92}, Proposition 2.1]\label{curvature formula}
    The $(0, 1)$-part of the connection 
\begin{equation}
    D=
    \begin{pmatrix}
        D_S & \a \\
        \a^* & D_Q  
    \end{pmatrix}
\end{equation} 
on the $C^\infty$ vector bundle $S \oplus Q$ 
defines an integrable complex structure 
and hence holomorphic vector bundle $E$ 
which is canonically isomorphic to the extension 
defined by $[\a] \in H^1(X, Q^\vee \otimes S)$. 
If one denotes the induced connection of the homomorphism vector bundle $\Hom(Q, S)$
 by $D^{\Hom}$, 
 the Chern curvature is computed as 
 \begin{equation}
    F=
    \begin{pmatrix}
        F_S +\a \wedge \a^* & D^{\Hom(Q, S)}\a \\
        D^{\Hom(S, Q)}\a^* & F_Q+\a^* \wedge \a 
    \end{pmatrix}. 
 \end{equation} 
\end{thm}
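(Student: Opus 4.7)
The plan is to decompose the theorem into three standard blocks and treat them in order: (i) integrability of the induced $(0,1)$-operator and identification of the resulting holomorphic bundle with the extension class, (ii) verification that $D$ coincides with the Chern connection for the direct sum metric, and (iii) the block computation of $F = D^2$. Each step is classical in Hermitian vector bundle theory, and the proof is largely organizational.

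First I would check integrability. Since $\alpha^*$ is of type $(1,0)$, the $(0,1)$-part of $D$ is upper triangular with diagonal $\bar{\partial}_S, \bar{\partial}_Q$ and upper-right entry $\alpha$. Squaring and applying the Leibniz rule $\bar{\partial}_S(\alpha q) = (\bar{\partial}\alpha)\, q - \alpha \wedge \bar{\partial}_Q q$ collapses the condition $(D^{0,1})^2 = 0$ to the hypothesis $\bar{\partial}\alpha = 0$. Newlander--Nirenberg then endows $S \oplus Q$ with a holomorphic structure, in which $S$ is a holomorphic subbundle with quotient $Q$; the standard Dolbeault description of $\Ext^1(Q, S)$ identifies the associated extension class with $[\alpha]$.

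Next I would show $D$ is the Chern connection. The diagonal block $D_0 = \mathrm{diag}(D_S, D_Q)$ is Chern for the summand metrics by hypothesis, so it suffices to verify that the off-diagonal block $A = \bigl(\begin{smallmatrix} 0 & \alpha \\ \alpha^* & 0 \end{smallmatrix}\bigr)$ is skew-Hermitian for $H_S \oplus H_Q$; this is exactly the defining property of $\alpha^*$ as the metric adjoint of $\alpha$. Since $D^{0,1}$ already matches the holomorphic structure constructed in Step 1, uniqueness of the Chern connection forces $D$ to be it. For the curvature I would expand $F = (D_0 + A)^2 = D_0^2 + (D_0 A + A D_0) + A^2$: the first piece gives the diagonal $F_S, F_Q$; the middle piece is the covariant derivative of $A$ with respect to the induced connections on $\Hom(Q, S)$ and $\Hom(S, Q)$, producing the off-diagonal entries $D^{\Hom(Q,S)}\alpha$ and $D^{\Hom(S,Q)}\alpha^*$; and $A^2$, interpreted as composition of endomorphism-valued forms with the wedge product, contributes the diagonal terms $\alpha \wedge \alpha^*$ and $\alpha^* \wedge \alpha$. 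Reassembling gives the stated matrix.

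The main obstacle is bookkeeping rather than conceptual: $\alpha$ and $\alpha^*$ have bidegrees $(0,1)$ and $(1,0)$ respectively, so $\alpha \wedge \alpha^*$ and $\alpha^* \wedge \alpha$ are independent $(1,1)$-forms, not negatives of each other, and the Leibniz rule in the integrability check picks up a sign from the degree of $\alpha$. Getting these signs and composition conventions mutually consistent is what ties the three steps together. As a built-in consistency check, I would verify at the end that the curvature $F$ so computed is skew-Hermitian with respect to $H_S \oplus H_Q$, which is forced by its identification with the Chern curvature and thus serves to catch any sign slip in the block calculation.
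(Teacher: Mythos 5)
Your proposal is correct and follows the standard argument (integrability of the upper-triangular $(0,1)$-operator from $\bar\partial\a=0$, identification of the extension class via the Dolbeault description of $\Ext^1(Q,S)$, and the block expansion $F=(D_0+A)^2$); this is exactly the computation in Tian's Proposition~2.1, which the paper cites without reproducing any proof. The one point to nail down is the sign convention in the definition of $\a^*$ (metric compatibility forces the lower-left $(1,0)$-block to be \emph{minus} the Hermitian adjoint of $\a$, with the form part conjugated), but you explicitly flag this bookkeeping and your proposed skew-Hermitianity check on $F$ would catch any slip.
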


Now if $X$ is a Fano manifold and $\omega=\omega_g$ is a K\"ahler-Einstein metric, 
we may consider $S=\cO_X$ endowed with the flat metric 
and $Q=T_X$ endowed with the K\"ahler-Einstein metric $g$. 
The multiple of the anti-canonical class $c_1(-K_X) \in H^1(X, Q^\vee \otimes S)$ 
defines the \emph{canonical extension} of $T_X$. 
For a fixed constant $a>0$ the extension is deifned by the $\Omega_X^1$-valued $(0,1)$-form
\begin{equation}
    \a := a \omega_g. 
\end{equation}

If one applies Theorem \ref{curvature formula}, it is 
easy to see that 
these dasum enjoy the HYM condition (\ref{HYM}) 
with $a =(n+1)^{-1/2}$. 
Therefore 
the existence of KE metric implies 
the semistablity not only of 
$T_X$ but also of its canonical extension sheaf. 

Let us finally consider the case when the anticanonical line bundle is 
nef and big. 
Thanks to Theorem \ref{existence of twisted KE}, 
if $\d(-K_X)>1$
we obtain the twisted K\"ahler Einstein metric $\omega=\omega_\e$ 
which satisfies $\Ric(\omega)=\omega +\e \theta$.   
It is natural to consider $S=\cO_X$ endowed with the flat metric, 
$Q=T_X$ endowed with twisted the K\"ahler-Einstein metric $g=g_\e$, 
and the $\Omega_X^1$-valued $(0,1)$-form $\a := a \omega_g$. 
It is obvious that $D^{\Hom} \a =0$. 
Since $g$ is twisted KE, we have the identity 
\begin{align}
    \frac{\sqrt{-1}}{2\pi} F_Q \wedge \omega^{n-1}
    =\frac{1}{n} (1+\frac{\e}{n}\Theta_\e) \omega^n \id_Q,  
\end{align}
where $\Theta_\e$ is the Hermitian endomorphism 
defined by $g_\e^{i\bar{k}}\theta_{j\bar{k}}$ in a local coordinate. 
The trouble here is that 
the choice of coordinates which diagonalize $\Theta_\e$ depends on $\e$. 
In our situation this point is settled by using the new result of \cite{DZ22}, Theorem 6.3 
which assures the existence of the K\"ahler-Einstein current; 
$\Ric(\omega_0) =\omega_0$. More precisely, 
for a fixed K\"ahler form $\a$ and any smooth representative $ \beta \in c_1(-K_X)$, 
we take a Ricci potential function $f$ such that 
$\Ric(\a)=\beta +\dd f$ holds.  
Then there exists a unique $\beta$-psh function $u$ with minimal singularity, 
which satisfies the degenerate complex Monge-Amp\`ere equation 
\begin{equation}\label{singular KE}
    (\beta +\dd u)^n = e^{-u+f} \a^n.  
\end{equation} 
Let us write $\omega_0 := \beta+\dd u$. 
Because the potential is bounded from above; $u \leq C$, 
the equation (\ref{singular KE}) tells that the non-pluripolar product $\omega_0^n$ is bounded from below by 
a smooth volume form.  
From the uniqueness, taking a subsequence if necessary, we observe that 
the twisted KE metrics $\omega_\e$ weakly converges 
to the KE current $\omega_0$. 
Since the potential $u$ has minimal singularity, 
one can see from the standard property of 
the non-pluripolar product (see \cite{GZ17}) 
that 
$\omega_\e^k$ converges to $\omega_0^k$ 
for any $1 \leq k \leq n$. 
With the above preparation, the identity 
\begin{align}
    (\Tr \Theta_\e^2) \omega_\e^n  
    =n(n-1) \theta \wedge \theta \wedge \omega_\e^{n-2}, 
\end{align} 
shows that the square sum of the eigenvalues of $\e \Theta_\e$ 
converges to zero. 
It yields $\e \Theta_\e \to 0$. 
To summurize the above, taking $a=(n+1)^{-1/2}$, 
$\omega_\e$ defines an approximate Hermite-Einstein structure 
of the vector bundle $E$, 
with respect to the K\"ahler metrics $\omega_\e$ themselves.  
Theorem \ref{approximate HE implies semistability} 
asserts that $E$ is slope semistable with respect to the nef and big line bundle 
$-K_X$. 
This completes the proof of Theorem \ref{slope semistability}.

\end{document}